\documentclass[12pt,reqno,a4paper]{amsart}
\usepackage{blindtext}
\usepackage{fullpage}
\usepackage{mathtools}
\usepackage{longtable}
\usepackage{amsmath,amssymb,amsthm}
\usepackage{amscd}
\usepackage{bm}
\usepackage{hyperref}   
\usepackage{dsfont}
\usepackage{enumerate}
\usepackage{epsfig}
\usepackage{float, graphicx}
\usepackage{latexsym, amsxtra}
\usepackage{mathrsfs}
\usepackage{multicol}
\usepackage[normalem]{ulem}
\usepackage{psfrag}
\usepackage[parfill]{parskip}
\usepackage{stmaryrd}
\usepackage{tikz}
\usepackage[T1]{fontenc}
\usepackage{url}
\usepackage{verbatim}
\usepackage{indentfirst}
\usepackage{tikz-cd}
\usepackage{booktabs}
\usepackage{svg}

\usepackage{mathtools}

\flushbottom

\makeatletter
\def\thm@space@setup{%
	\thm@preskip=2ex \thm@postskip=2ex
}
\makeatother

\oddsidemargin=0in
\evensidemargin=0in
\textwidth=6.5in
\setlength{\unitlength}{1cm}
\setlength{\parindent}{0.6cm}

\hypersetup{hidelinks}

\newtheorem{thm}{Theorem~}[section]
\newtheorem{lem}[thm]{Lemma~}

\newtheorem{prop}[thm]{Proposition~}
\newtheorem{ques}[thm]{Question~}
\newtheorem{cor}[thm]{Corollary~}

\theoremstyle{remark}
\newtheorem{rmk}[thm]{Remark~}
\newtheorem{ex}[thm]{Example~}

\theoremstyle{definition}  
\newtheorem{defn}[thm]{Definition~}

\newcommand{\calA}{\mathcal{A}}

\newcommand{\CC}{\mathbb{C}}

\newcommand{\RR}{\mathbb{R}}
\newcommand{\LL}{\mathbb{L}}
\newcommand{\PP}{\mathbb{P}}

\newcommand{\HH}{\mathbb{H}}

\newcommand\mult{\mathrm{mult}}

\title{Homology of local systems on real line arrangement complements}

\author[B. Xie]{Baiting Xie}
\address{Qiuzhen College, Tsinghua University, China}
\email{xbt23@mails.tsinghua.edu.cn}

\author[C. Yu]{Chenglong Yu}
\address{Center for Mathematics and Interdisciplinary Sciences, Fudan University and
Shanghai Institute for Mathematics and Interdisciplinary Sciences (SIMIS), Shanghai, China}
\email{yuchenglong@simis.cn}
\date{}

\begin{document}
		
\begin{abstract}
	We study the homology groups of the complement of a complexified real line arrangement with coefficients in complex rank-one local systems. Using Borel--Moore homology, we establish an algorithm computing their dimensions via the real figures of the arrangement. It enables us to give a new upper bound. We further consider the case where the arrangement contains a sharp pair and make partial progress on a conjecture proposed by Yoshinaga.
\end{abstract}
	
	\maketitle
     \setcounter{tocdepth}{1}

	\tableofcontents

\section{Introduction}\label{section: introduction}
In the theory of hyperplane arrangements, a classical problem is to determine which topological invariants of their complements are governed by their combinatorics. The pioneering work \cite{MR558866} of Orlik and Solomon proves that the cohomology ring is combinatorially determined. On the other hand, Rybnikov's counter-example \cite{rybnikov2011fundamental} (see also \cite{MR2188450}) implies that the fundamental group is not. An important intermediate case between them is homology groups with coefficients in complex rank-one local systems. This is also related to the monodromy on cohomology of the corresponding Milnor fibration.

More explicitly, let $ \calA $ be a hyperplane arrangement in $ \CC\PP^{n} $ and  $ M(\calA) $ be its complement. The combinatorial data of $ \calA $ are encoded in its intersection lattice $ L(\calA) $. This lattice consists of all intersections of hyperplanes in $ \calA $, ordered by reverse inclusion and ranked by codimension. For any given field $ \mathbb{K} $, taking the counter-clockwise monodromy around each hyperplane in $ \calA $ yields the following one-to-one correspondence:
\begin{equation*}
    \left\{
            \begin{tabular}{c}
                isomorphism classes of rank-one \\
                  $ \mathbb{K} $-local systems $ \LL $ on $ M(\calA) $
            \end{tabular}
        \right\}
        \quad\stackrel{1:1}{\longleftrightarrow}\quad
        \left\{
            \begin{tabular}{c}
                 $ m \colon \calA \rightarrow \mathbb{K}^{\times} $ such that \\
                 $ \prod\limits_{H \in \calA} m(H) = 1 $
            \end{tabular}
        \right\}.
\end{equation*}

 The problem mentioned above can then be precisely stated as follows, which has been intensively studied but still remains widely open.
\begin{ques}
	Given a hyperplane arrangement $ \calA $ and a rank-one $ \mathbb{K} $-local system $ \LL $ on $ M(\calA) $, are the Betti numbers  $ h_{*}(M(\calA),\LL) = \dim_{\mathbb{K}} H_{*}(M(\calA),\LL) $ determined by the intersection lattice $ L(\calA) $ and the monodromy map $ m \colon \calA \rightarrow \mathbb{K}^{\times} $?
\end{ques}

In the sequel, we focus on the case where $ \calA $ is a line arrangement in $ \CC\PP^{2} $. In this setting, the intersection lattice $ L(\calA) $ can be recovered from the set $ L_{2}(\calA) $ of all intersection points and the associated sets $ \calA_{p} $ of lines in $ \calA $ containing each point $ p \in L_{2}(\calA) $. Let $ \mult(p) $ denote the size of $ \calA_{p} $. Note that $ h_{0}(M(\calA),\LL) = 0  $ when $ \LL $ is nontrivial, and the Euler characteristic of $ \LL $ is determined by $ L(\calA) $. Thus it suffices to compute $h_{1}(M(\calA),\LL)$. When $\mult(p)\in \{2,3\}$ for all $ p \in L_{2}(\calA) $, the work of Papadima--Suciu \cite{papadima2017milnor} shows how $h_{1}(M(\calA),\LL)$ is determined by $L(\calA)$ when $\LL$ is the local system arising from Milnor fibers of $\calA$.

When $ \calA $ is a complexified real line arrangement and $ \LL $ is a complex rank-one local system on $ M(\calA) $, the planar graph given by the real locus of $ \calA $ provides a foundation for several approaches to compute $ h_{1}(M(\calA),\LL) $. The first effective topological algorithm was proposed by Cohen and Suciu \cite{MR1310725}. They compute $ h_{1}(M(\calA),\LL) $ by applying Fox calculus to Randell's presentation of $ \pi_{1}(M(\calA)) $ \cite{MR671654}. Using twisted minimal chain complexes, Yoshinaga \cite{MR3090727} developed a method for computing $ h^{1}(M(\calA),\LL) $ that does not involve $ \pi_{1}(M(\calA)) $. However, both algorithms rely on the real figure, which contains more data than the intersection lattice $ L(\calA) $.

Less ambitiously, one may seek for a combinatorial upper bound on $ h_{1}(M(\calA),\LL) $. The concept of resonant points arises naturally in the study of this problem. A point $ p \in L_{2}(\calA) $ is called \textbf{resonant} (with respect to $\LL$) if $ \mult(p) \geq 3$ and $ \prod_{l \in \calA_{p}} m(l) = 1$. Cohen, Dimca and Orlik \cite{MR2038782} proved the following theorem, which provides the best known upper bound on $ h_{1}(M(\calA),\LL) $ for a general line arrangement $ \calA $. Note that although they only proved the case of constant monodromy, their method also holds for general complex local systems $ \LL $.
\begin{thm}[{\cite[Theorem 13]{MR2038782}}]
	Let $ \calA $ be a line arrangement in $ \CC\PP^{2} $.Let $ \LL $ be a complex rank-one local system on $ M(\calA) $ with monodromy map $ m \colon \calA \rightarrow \CC^{\times} $ satisfying that $ m(l) \neq 1 $ for each $ l \in \calA $. Let $ l_{0} $ be a line in $ \calA $ and denote by $ R_{0} $ the set of resonant points on $ l_{0} $. Then
	\begin{equation*}
		h_{1}(M(\calA),\LL) \leq \sum\limits_{p \in R_{0}} (\mult(p)-2).
	\end{equation*}
\end{thm}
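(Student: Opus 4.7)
The plan is to compute $h_1(M(\calA), \LL)$ via a pencil projection with $l_0$ placed at infinity, apply the Leray spectral sequence to reduce to a one-dimensional problem on the base, and then bound global sections by analyzing the monodromy at infinity.

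First I would choose an affine chart $\CC^2 = \CC\PP^2 \setminus l_0$ putting $l_0$ at infinity, so $M(\calA) = \CC^2 \setminus |\calA'|$ with $\calA' = \calA \setminus \{l_0\}$. After a generic linear change of coordinates, assume no line of $\calA'$ is vertical and that distinct multiple points of $\calA'$ have distinct $x$-coordinates. The parallel families in $\calA'$ are then in natural bijection with $L_2(\calA, l_0)$: a point $p$ of multiplicity $\mult(p)$ gives rise to a family of $\mult(p) - 1$ parallel lines. Consider the projection $\pi \colon M(\calA) \to \CC$, $(x,y) \mapsto x$. Each fiber $F_{x_0}$ is a punctured affine line on which every puncture carries monodromy $m(l) \neq 1$ (or a product of such), so $\LL|_{F_{x_0}}$ is nontrivial; hence $R^0\pi_*\LL = 0$, and the Leray spectral sequence collapses to give
\[
h_1(M(\calA), \LL) \;=\; \dim_{\CC}\, H^0\bigl(\CC,\, R^1\pi_*\LL\bigr).
\]

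Let $S \subset \CC$ be the finite set of $x$-coordinates of points in $L_2(\calA)\setminus l_0$, and set $U = \CC\setminus S$. The restriction $\calM := (R^1\pi_*\LL)|_U$ is a local system of rank $n-2$, and $H^0(\CC, R^1\pi_*\LL)$ embeds into $H^0(U,\calM) = \calM^{\pi_1(U)}$. Since $U = \CC\PP^1 \setminus (S\cup\{\infty\})$, any invariant vector lies in $\ker(T_\infty - \Id)$, where $T_\infty$ denotes the monodromy of $\calM$ around $\infty$; therefore $h_1(M(\calA),\LL) \le \dim\ker(T_\infty - \Id)$. The heart of the argument is now to analyze $T_\infty$. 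Viewing a generic fiber $F_{x_0}$ as $\CC\PP^1$ with $n$ marked punctures (one at $\infty$ corresponding to $l_0$, carrying monodromy $m(l_0)$, and $n-1$ finite punctures carrying monodromies $m(l)$ for $l \in \calA'$), the parallel family attached to each $p \in L_2(\calA, l_0)$ yields a cluster of $\mult(p)-1$ punctures that collide at infinity as $x_0 \to \infty$. A Picard--Lefschetz analysis on the resulting degeneration identifies $T_\infty$ (up to conjugation) with a product of ``cluster twists,'' one for each $p \in L_2(\calA, l_0)$, acting block-diagonally on subspaces $V_p \subseteq H^1(F_{x_0}, \LL)$ with $\dim V_p \le \mult(p) - 2$. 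The fixed space of the $p$-twist vanishes unless $\prod_{l\in\calA_p}m(l) = 1$, i.e.\ unless $p$ is resonant; when $p$ is resonant, its fixed space has dimension at most $\mult(p) - 2$. Summing over $p\in R_0$ yields the claimed bound.

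The main obstacle is the precise identification of $T_\infty$ with this block-structured monodromy. I would establish it by blowing up $\CC\PP^2$ at each point of $L_2(\calA, l_0)$ to resolve the pencil along $l_0$, producing chains of exceptional $\CC\PP^1$'s that make the cluster degeneration of each parallel family geometrically transparent. The required Picard--Lefschetz transformations can then be read off from the local model of the pencil at each exceptional component, and the dimension bound on the fixed space of each block reduces to a standard calculation for rank-one local systems on punctured spheres. A subtlety to verify is that the injection $H^0(\CC, R^1\pi_*\LL) \hookrightarrow H^0(U,\calM)$ loses no information at the finite singular fibers above $S$; this is handled separately by checking that each stalk of $R^1\pi_*\LL$ at $x_p \in S$ embeds into the corresponding nearby-cycle space, which has the same invariance properties.
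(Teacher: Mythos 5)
This theorem is quoted by the paper from Cohen--Dimca--Orlik and is not proved in the paper, so there is no in-paper argument to compare against; the paper's own new bounds are obtained by a different (Borel--Moore homology) method. Your pencil/Leray strategy is the classical route to such statements, and most of your reductions are sound: $R^0\pi_*\LL=0$, the identification of $h_1(M(\calA),\LL)$ with $\dim H^0(\CC,R^1\pi_*\LL)$, the passage to $\ker(T_\infty-\Id)$, and the purity/nearby-cycle argument for injectivity of restriction over the finite singular values all go through.

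The genuine gap is in your analysis of $T_\infty$. Write $n=\#\calA$ and let $k$ be the number of intersection points of $\calA$ on $l_0$. The generic fiber is $\CC\PP^{1}$ minus $n$ points, so $\dim H^1(F_{x_0},\LL)=n-2$, whereas your cluster subspaces satisfy $\sum_p\dim V_p\le\sum_p(\mult(p)-2)=(n-1)-k$. Hence the $V_p$ miss a complementary piece of dimension $k-1$, and the asserted bound on $\dim\ker(T_\infty-\Id)$ does not follow from the stated properties of the $V_p$ alone. Relatedly, $T_\infty$ is not merely a product of cluster twists: since all clusters rotate once about a common center as $x_0$ traverses a large circle, $T_\infty$ is that product composed with the Dehn twist about the puncture of the fiber lying on $l_0$, and it is exactly this extra factor that acts nontrivially on the missing $(k-1)$-dimensional piece. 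One must show $T_\infty$ has no invariants there; this holds because on that piece $T_\infty$ acts as the full twist of the $k$ clusters, which (as for the reduced Burau representation) is multiplication by $\prod_{l\ne l_0}m(l)=m(l_0)^{-1}\ne 1$ --- and this is precisely where the hypothesis $m(l_0)\ne 1$ and the choice of the distinguished line $l_0$ enter, neither of which is used anywhere in your outline. A secondary point: the direct sum decomposition into the $V_p$ and the complement requires the monodromy $\prod_{l\in\calA_p\setminus\{l_0\}}m(l)$ around each cluster circle to be nontrivial; when it equals $1$ for a non-resonant $p$ you only get a filtration, and you should then bound the invariants of the resulting block-triangular operator by the sum of the invariants of its diagonal blocks.
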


In this paper, we focus on the case where $ \calA $ is a complexified real line arrangement and $ \LL $ is a rank-one $ \mathbb{K} $-local system such that the monodromy $ m(l) \neq 1 $ for each $ l \in \calA $. Using Borel--Moore homology, we establish a method for computing $ h_{1}(M(\calA),\LL) $, which can be regarded as a dual version of Yoshinaga's approach. This method enables a more detailed combinatorial analysis. As a result, we prove the following theorem.
\begin{thm}\label{thm: main}
	Let $ \calA $ be a complexifed real line arrangement in $ \CC\PP^{2} $. Let $ \mathbb{K} $ be a field. Let $ \LL $ be a rank-one $ \mathbb{K} $-local system on $ M(\calA) $ with monodromy map $ m \colon \calA \rightarrow \mathbb{K}^{\times} $ satisfying that $ m(l) \neq 1 $ for each $ l \in \calA $. Let $ l_{0} $ be a line in $ \calA $ and denote by $ R_{0} $ the set of resonant points on $ l_{0} $. If $ \# L_{2}(\calA) > 1 $, then we have
	\begin{equation*}
		h_{1}(M(\calA),\LL) \leq \max(0,\# R_{0}-1).
	\end{equation*}
\end{thm}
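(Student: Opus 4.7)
The plan is to compute $h_1(M(\calA), \LL)$ by combining the Borel--Moore chain complex algorithm on the real figure (developed earlier in the paper, dual to Yoshinaga's twisted minimal complex) with a careful localization at the distinguished line $l_0$. Since $m(l) \ne 1$ for every line we have $h_0(M(\calA), \LL) = 0$, and the Euler characteristic of $\LL$ is combinatorial, so the task reduces to bounding $h_1$ directly from a cellular model.

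First, I would set up the Borel--Moore chain complex from the real figure and identify $h_1(M(\calA),\LL)$ with the dimension of an explicit subquotient whose generators are indexed by cells meeting a neighborhood of $l_0$: the intersection points $p_1, \dots, p_s$ of $l_0$ with the other lines, the real arcs of $l_0$ between them, and the half-chambers on the two sides of $l_0$. A Mayer--Vietoris or excision-type decomposition separating a tubular neighborhood of $l_0$ from its complement in $\RR\PP^2$ should show that every class in $h_1(M(\calA),\LL)$ is representable by a cycle concentrated near $l_0$, so only the behavior at the $p_i$ matters.

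Second, I would analyze the twisted cycle condition vertex by vertex on $l_0$. At a non-resonant vertex $p_i$ the local monodromy is non-degenerate (including the multiplicity-two case, where $m(l_0)m(l) \ne 1$), so the cycle condition forces the local coefficient to vanish and nothing is contributed. At each resonant vertex $p \in R_0$ the local differential degenerates in exactly one direction, producing a single 1-cycle contribution, so the space of candidate 1-cycles near $l_0$ has dimension at most $\#R_0$. A single global relation, coming either from $l_0$ being a complete projective line (so the ``sum around $l_0$'' is zero) or equivalently from the constraint $\prod_{l \in \calA} m(l) = 1$ forced by descending $\LL$ to $\CC\PP^2$, cuts one further dimension and yields the bound $\max(0, \#R_0 - 1)$, the $\max$ accounting for $\#R_0 \in \{0,1\}$.

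The main obstacle will be showing that each resonant vertex contributes only a single dimension, rather than $\mult(p) - 2$ as in the Cohen--Dimca--Orlik estimate. The reason to expect this is that once chains are constrained to a neighborhood of $l_0$, only the two chambers at $p$ that actually touch $l_0$ are available, and the extra $\mult(p) - 3$ classes visible to CDO should collapse to boundaries here. Producing explicit bounding chains for those unwanted classes, directly from the combinatorics of chambers and edges of the real figure adjacent to $l_0$, will be the technical heart of the proof and the step where the more refined Borel--Moore analysis genuinely improves on previous approaches.
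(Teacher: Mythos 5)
There is a genuine gap at the step you yourself identify as the technical heart. Your plan is to localize everything to a tubular neighborhood of $l_{0}$ and then argue, vertex by vertex on $l_{0}$, that each resonant point contributes only one dimension because ``only the two chambers at $p$ that actually touch $l_{0}$ are available.'' This cannot work: any argument confined to a neighborhood of $l_{0}$ sees the full local contribution $\mult(p)-2$ at each resonant $p\in R_{0}$ (this is exactly what the Cohen--Dimca--Orlik argument extracts), and the collapse from $\sum_{p\in R_{0}}(\mult(p)-1)$ generators down to $\#R_{0}-1$ is forced by relations supported \emph{away} from $l_{0}$. Concretely, the paper's proof works with generators $\alpha(l)$, one for each line $l\neq l_{0}$ through a resonant point of $l_{0}$ (so $\#\calA'=\sum_{p\in R_{0}}(\mult(p)-1)$ of them, not $\#R_{0}$), and the relations that kill all but $\#R_{0}-1$ of them come from the ``neighbors'' $q(l)$: for each such line $l$, the lowest intersection point of $l$ not lying on $l_{0}$. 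Each neighbor $q$ yields an element $\beta(q)\in K\cap A'$ built from the bounded chambers filling the triangles between $l_{0}$ and the lines through $q$ (Lemma \ref{lem: connectable points provide elements}), these $\beta(q)$ are shown to be linearly independent by a highest-point argument, and the count $\#N\geq\#\calA'-(\#R_{0}-1)$ (Lemma \ref{lem: case of repeated connectable points}: two lines share a neighbor only in a very constrained configuration, at most once per consecutive pair of points of $R_{0}$) gives the bound. None of this data is visible in a neighborhood of $l_{0}$.

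Two further points. First, your reduction ``every class is representable by a cycle concentrated near $l_{0}$'' is the analogue of the paper's Lemma \ref{lem: reduction to a line} ($K+A'=A$), but that lemma is not an excision statement: it is proved by an induction on the height of resonant points, repeatedly using bounded chambers of the arrangement to push generators at points off $l_{0}$ into $K+A'$; a Mayer--Vietoris decomposition along a tube around $l_{0}$ would not by itself produce generators indexed only by angles at points of $R_{0}$. Second, your accounting of the final ``$-1$'' as a single global relation coming from $l_{0}$ being a projective line (or from $\prod_{l}m(l)=1$) does not match how the bound actually arises; in the paper the $-1$ is a combinatorial deficiency in the count of distinct neighbors, not a cycle-around-$l_{0}$ relation. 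As written, your outline would at best reproduce the Cohen--Dimca--Orlik bound $\sum_{p\in R_{0}}(\mult(p)-2)$, and the passage to $\max(0,\#R_{0}-1)$ is missing.
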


\begin{rmk}
	When $ \# L_{2}(\calA) = 1 $, all lines in $ \calA $ pass through a common point. Then $ M(\calA) $ deformation retracts onto the $ \CC\PP^{1} $ with $ \# \calA $ points removed. As shown in Example \ref{example: punctured P1}, $ 	h_{1}(M(\calA),\LL) = \# \calA-2 $.
\end{rmk}

As an application, we consider the case where $ \calA $ contains a sharp pair, which was first considered by Yoshinaga \cite{MR3090727}. Note that each pair of lines in $ \calA $ divides $ \RR\PP^{2} $ into two connected components. Such a pair is called a \textbf{sharp pair} if one of these two components contains no intersection points in $ L_{2}(\calA) $. The following theorem is a slight generalization of Yoshinaga's theorem 3.21 in \cite{MR3090727}.
\begin{thm}\label{thm: main for sharp pair}
		Let $ \calA $ be a complexified real line arrangement in $ \CC\PP^{2} $. Let $ \mathbb{K} $ be a field. Let $ \LL $ be a rank-one $ \mathbb{K} $-local system on $ M(\calA) $ with monodromy map $ m \colon \calA \rightarrow \mathbb{K}^{\times} $ satisfying that $ m(l) \neq 1 $ for each $ l \in \calA $. If $ \calA $ contains a sharp pair, then $ h_{1}(M(\calA),\LL) \leq 1 $.
\end{thm}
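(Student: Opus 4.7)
The plan is to deduce Theorem~\ref{thm: main for sharp pair} as an application of Theorem~\ref{thm: main}: I will choose $l_0$ to be a line in the sharp pair and bound the set $R_0$ of resonant points on $l_0$ by $2$, which will give $h_1(M(\calA), \LL) \leq \max(0, 2-1) = 1$. First, I would dispose of the degenerate case $\# L_{2}(\calA) \leq 1$: if $\calA$ is a pencil, then $M(\calA)$ fibers over a punctured $\CC\PP^{1}$, and a direct computation using $m(l) \neq 1$ for every $l$ yields $h_1 = 0$.

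In the main case $\# L_{2}(\calA) > 1$, let $(l_1, l_2)$ be the sharp pair meeting at $p_0 = l_1 \cap l_2$, with $R'$ the empty real region. A preliminary observation is that no line of $\calA \setminus \{l_1, l_2\}$ passing through $p_0$ can lie entirely inside $R'$ away from $p_0$: otherwise it would meet every other line of $\calA$ in $R'$, contradicting emptiness unless $\calA$ is a pencil. Thus every line in $\calA \setminus \{l_1, l_2\}$ that misses $p_0$ must cross $R'$ as a chord from $l_1 \setminus \{p_0\}$ to $l_2 \setminus \{p_0\}$, and these chords form a non-crossing family inducing a monotone pairing between their endpoints on the two lines.

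Taking $l_0 = l_1$, Theorem~\ref{thm: main} gives $h_1(M(\calA), \LL) \leq \max(0, \#R_0 - 1)$, so the task reduces to showing $\#R_0 \leq 2$. I would argue by contradiction, assuming three distinct resonant points $p, q, r$ on $l_1$. At each such point, the resonance condition $\prod_{l \in \calA_{p}} m(l) = 1$ determines the product of the chord-line monodromies through that point in terms of $m(l_1)$. The non-crossing chord pairing then transfers these products to contiguous blocks of chord endpoints on $l_2$, and combined with the global constraint $\prod_{l \in \calA} m(l) = 1$, this should force some $m(l) = 1$, contradicting the standing hypothesis.

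The main obstacle is closing this final combinatorial contradiction: the chord pairing alone does not obviously produce a rigid monodromy identity, and one may need to track resonance on $l_2$ symmetrically or to invoke the Borel--Moore chain-level machinery developed earlier in the paper. In the latter approach, the empty region $R'$ supplies a distinguished $2$-cell whose boundary is supported on $l_1 \cup l_2$, yielding an additional relation in the twisted Borel--Moore chain complex that tightens the bound from Theorem~\ref{thm: main} by one and directly gives $h_1 \leq 1$, bypassing the explicit estimate on $\#R_0$.
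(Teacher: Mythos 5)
There is a genuine gap, and it is fatal to the proposed route. Your entire reduction rests on the claim that a line $l_{0}$ of the sharp pair carries at most two resonant points, so that Theorem \ref{thm: main} gives $h_{1}\leq 1$. That claim is false. Resonance is a condition on the monodromy $m$, not on the combinatorics alone, and the sharp-pair geometry does not limit the number of resonant points on $l_{0}$. Concretely, take $l_{1}=\{y=0\}$, $l_{2}=\{y=x\}$, and attach two further lines of small positive slope through each of $(-1,0)$, $(-2,0)$, $(-3,0)$, with slopes chosen decreasing as the base point moves left so that all pairwise intersections of the six extra lines land in $\{0<y\leq x\}$; then $(l_{1},l_{2})$ is a sharp pair and $l_{1}$ carries three triple points. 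Setting $m(l_{1})=u$ with $u^{5}=1$, $u\neq 1$, choosing the two monodromies at each triple point to multiply to $u^{-1}$ with neither equal to $1$, and putting $m(l_{2})=u^{2}$ satisfies $\prod_{l}m(l)=1$, makes all three triple points on $l_{1}$ resonant, and keeps every $m(l)\neq 1$. So $\#R_{0}=3$ and Theorem \ref{thm: main} only yields $h_{1}\leq 2$; a variant of this construction (pairing the six extra lines so that they meet $l_{2}$ in three triple points) makes \emph{both} lines of the sharp pair carry three resonant points, so choosing the other line of the pair does not rescue the argument. You in fact concede that the ``final combinatorial contradiction'' is not closed; it cannot be closed, because the statement you are trying to contradict is consistent. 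Your fallback --- one extra relation from a $2$-cell supported in the empty region --- also cannot work: such chambers are unbounded in the affine chart (they meet $l_{\infty}$), so they contribute no class to $H^{BM}_{2}(M(\calA),\LL)$ in the sense of Lemma \ref{lem: generator of H_2}, and in any case a single additional relation cannot bridge a deficit $\#R_{0}-1$ that can be made arbitrarily large.

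The paper's actual proof is therefore not a refinement of Theorem \ref{thm: main} but a different use of the same machinery: it normalizes coordinates so that every line $l\notin\calA_{p_{0}}$ has its neighbor point $q(l)$ on $l_{0}'$, and then uses the relations of Lemma \ref{lem: connectable points provide elements} to show (Lemma \ref{lem: all elements are scalar multiples of sharp pair} and Corollary \ref{cor: bound for sharp pair}) that every generator $[\alpha(l)]$ of $A'/(K\cap A')$ is a scalar multiple of the single class $[\alpha(l_{0}')]$. This bounds $\dim A'/(K\cap A')=h_{1}$ by $1$ with no control on $\#R_{0}$ whatsoever. Two smaller points: your treatment of the degenerate case is also wrong --- for a pencil of $n$ concurrent lines, $M(\calA)\simeq\CC\times(\CC\PP^{1}\setminus\{n\text{ points}\})$ and a nontrivial rank-one local system has $h_{1}=n-2$, not $0$ (this shows the hypothesis $\#L_{2}(\calA)>1$, present in Theorem \ref{thm: main}, is genuinely needed here as well); and your ``preliminary observation'' about chords of the empty region, while essentially correct, only describes lines missing $p_{0}$ and plays no role in closing the argument.
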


In particular, for the case where $ m $ is a constant map, the following theorem is a partial progress towards Conjecture 1.2 in \cite{MR3666711}.
\begin{thm}\label{thm: odd monodromy}
	 Let $ \calA $ be a complexified real line arrangement in $ \CC\PP^{2} $. Let $ \mathbb{K} $ be a field. Let $ \LL $ be a rank-one $ \mathbb{K} $-local system on $ M(\calA) $ with constant monodromy $ \zeta \in \mathbb{K}^{\times} $. If $ \calA $ contains a sharp pair and $ H_{1}(M(\calA),\LL) \neq 0 $, then there exists an odd number $ d $ such that $ \zeta^{d} = 1 $.
\end{thm}

The structure of the paper is as follows. In \S\ref{section: preparation}, we review some basic facts on Borel--Moore homology. Using these preliminary results, an algorithm is established for computing $ h_{1}(M(\calA),\LL) $ in \S\ref{section: main construction}. In \S\ref{section: proof of main theorem} we prove Theorem \ref{thm: main}. The case where $ \calA $ contains a sharp pair is discussed in \S\ref{section: sharp pair}.

\textbf{Acknowledgement.}
The second author is supported by the national key research and development program of China (No. 2022YFA1007100) and NSFC 12201337. We thank Qiliang Luo and Alex Suciu for their interest and helpful discussions. We also thank Yongqiang Liu and Laren\c{t}iu Maxim for their comments on the intial version of this paper. Based on their suggestions, we have generalized the coefficient field from $ \CC $ to an arbitrary field $ \mathbb{K} $.

\section{Preparation on homology of local systems}\label{section: preparation}

\subsection{Borel--Moore homology with local coefficients}\label{subsection: BM homology}
First we recall some basic facts and notation about Borel--Moore homology $ H^{BM}_{i}(X,\LL) $ of connected topological manifolds $ X $ with coefficients in rank-one $ \mathbb{K} $-local systems $ \LL $. For more details, see \cite{MR1481706}.

Denote by $ C^{BM}_{i}(X,\LL) $ the space of locally finite chains on $ X $ with coefficients in $ \LL $. Endowing them with the usual boundary maps $ \partial_{i} \colon C^{BM}_{i}(X,\LL) \rightarrow C^{BM}_{i-1}(X,\LL) $, we obtain a chain complex:

\begin{equation*}
	\cdots \stackrel{\partial_{3}}{\longrightarrow} C^{BM}_{2}(X,\LL) \stackrel{\partial_{2}}{\longrightarrow} C^{BM}_{1}(X,\LL) \stackrel{\partial_{1}}{\longrightarrow} C^{BM}_{0}(X,\LL) \longrightarrow 0.
\end{equation*}

The Borel--Moore homology $ H^{BM}_{*}(X,\LL) $ of $ X $ with coefficients in $ \LL $ is defined to be the homology groups of the above chain complex. By definition, the space of singular chains $ C_{*}(X,\LL) $ can be embedded into $ C^{BM}_{*}(X,\LL) $, which induces a natural map $ H_{*}(X,\LL) \rightarrow H^{BM}_{*}(X,\LL)  $.

\begin{ex}\label{example: BM of n-cells}
	Let $ \Delta $ be an oriented open $ n $-cell and $ \LL $ a rank-one $ \mathbb{K} $-local system on $ \Delta $. Since $ \Delta $ is contractible, $ \LL $ is isomorphic to the constant sheaf $ \underline{\mathbb{K}} $ and there exists a nonzero global section $ e $ of $ \LL $. Then
	\begin{equation*}
		H^{BM}_{i}(\Delta,\LL) = \begin{cases}
			\mathbb{K} & i=n\\
			0 & \text{otherwise}
		\end{cases}
	\end{equation*}
	and the generator of $ H^{BM}_{n}(\Delta, \LL)$ is given by a locally finite chain $ \Delta \otimes e $. 
\end{ex}

Now we introduce some basic properties of Borel--Moore homology. The following lemma provides a useful long exact sequence.

\begin{lem}[{\cite[Corollary V.5.10]{MR1481706}}]\label{lem: localization sequence of BM}
	Let $ F $ be a closed subset of a locally compact space $ X $ with $ U = X - F $ its complement. Then there exist push-forward maps $  H^{BM}_{*}(F,\LL) \rightarrow  H^{BM}_{*}(X,\LL) $, restriction homomorphisms $ H^{BM}_{*}(X,\LL) \rightarrow  H^{BM}_{*}(U,\LL) $, and a long exact localization sequence:
	\begin{equation*}
		\cdots \longrightarrow H^{BM}_{i}(F,\LL)\longrightarrow H^{BM}_{i}(X,\LL) \longrightarrow H^{BM}_{i}(U,\LL) \longrightarrow H^{BM}_{i-1}(F,\LL) \longrightarrow \cdots.
	\end{equation*}
\end{lem}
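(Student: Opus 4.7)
The plan is to realize the claimed long exact sequence as the one associated to a short exact sequence of chain complexes
$$
0 \longrightarrow C^{BM}_*(F,\LL) \xrightarrow{\,i_*\,} C^{BM}_*(X,\LL) \xrightarrow{\,j^*\,} C^{BM}_*(U,\LL) \longrightarrow 0,
$$
where $i:F\hookrightarrow X$ and $j:U\hookrightarrow X$ are the inclusions. Once this is in hand, the localization sequence and the connecting map $H^{BM}_i(U,\LL)\to H^{BM}_{i-1}(F,\LL)$ follow immediately from the snake lemma.

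First I would construct the pushforward $i_*$. A locally finite $n$-chain on $F$ with coefficients in $\LL|_F$ is a formal sum $c=\sum_\alpha \sigma_\alpha \otimes e_\alpha$ of singular simplices $\sigma_\alpha:\Delta^n\to F$ tensored with sections $e_\alpha\in\Gamma(\Delta^n,\sigma_\alpha^*\LL)$, locally finite in $F$. Composing each $\sigma_\alpha$ with the inclusion $F\hookrightarrow X$ gives a formal sum in $X$, and local finiteness is preserved precisely because $F$ is closed: any compact $K\subset X$ meets $F$ in a compact subset, which in turn meets only finitely many $\sigma_\alpha$. The resulting map commutes with $\partial$ and is injective at the chain level.

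For the restriction $j^*$, the cleanest route is sheaf-theoretic: define $\mathcal{C}^{BM}_n$ as the sheafification of the presheaf $V\mapsto C_n(X,X\setminus V;\LL)$ on $X$, so that $C^{BM}_n(X,\LL)=\Gamma(X,\mathcal{C}^{BM}_n)$ and $j^*$ becomes ordinary restriction of sections from $X$ to $U$. Exactness on the left and in the middle then reduces to the observation that a section of $\mathcal{C}^{BM}_n$ whose restriction to $U$ vanishes is supported on $F$, hence lies in the image of $i_*$. The principal obstacle is surjectivity of $j^*$: given a locally finite chain on $U$, one must extend it to a locally finite chain on $X$. This is where softness (equivalently, an appropriate flabbiness) of the chain sheaf enters, ensuring that sections over the open set $U$ extend over $X$. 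At the level of singular chains the same conclusion can alternatively be obtained by a barycentric subdivision argument adapted to the open cover $\{U,\,X\setminus K\}$ as $K\subset F$ ranges over exhausting compacts, in the style of the classical excision argument. Since $\LL$ is a complex rank-one local system and all of these constructions are purely local, the proof reduces to the constant-coefficient case by trivializing $\LL$ over contractible open sets.
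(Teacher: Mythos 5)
The paper does not prove this lemma at all: it is invoked as a standard property of Borel--Moore homology (the localization/Gysin sequence, as in Bredon's \emph{Sheaf Theory} or Chriss--Ginzburg \S 2.6), so there is no argument of the authors' to measure yours against. Your route --- realizing the sequence as the long exact sequence attached to a short exact sequence of locally finite chain complexes --- is one of the two standard proofs, and its outline is sound; in particular the construction of $i_*$ is exactly right, closedness of $F$ being precisely what preserves local finiteness.

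That said, the two points you flag and defer are not side remarks but the entire content of the lemma, and your sketch asserts rather than establishes them. For exactness in the middle: a section of $\mathcal{C}^{BM}_n$ vanishing on $U$ is supported on $F$, but its germs at points of $F$ are classes of chains in $X$, not chains in $F$, so the step ``hence lies in the image of $i_*$'' silently uses the identification $\Gamma_F(X,\mathcal{C}^{BM}_*)\simeq C^{BM}_*(F,\LL)$ (at least up to quasi-isomorphism); this is an excision/tautness statement needing its own subdivision argument, of the same order of difficulty as surjectivity of $j^*$. For surjectivity itself, an appeal to softness or flabbiness is not a one-liner: an honest locally finite chain on $U$ generally does \emph{not} extend to a locally finite chain on $X$ (its simplices may accumulate along $\partial U\subset F$), so the extension must happen at the level of sections of the sheafified complex, which is again the subdivision argument rather than a formal property. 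Neither issue is fatal, and for the spaces actually used in the paper (complements of normal crossing divisors in smooth projective surfaces) everything can be sidestepped by taking the sheaf-theoretic definition $H^{BM}_i(X,\LL)=\mathbb{H}^{-i}(X,\omega_X\otimes\LL)$, where the sequence falls out of the distinguished triangle $i_*i^!\to\mathrm{id}\to Rj_*j^*$; but as written your chain-level proof has two genuine gaps at exactly the places you labelled as ``the principal obstacle.''
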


We remark that the connecting homorphisms $ H^{BM}_{i}(U,\LL) \longrightarrow H^{BM}_{i-1}(F,\LL) $ in the above long exact sequence are called the residue maps.

The following lemma is a corollary of generalized Poincar\'e duality.
\begin{lem}[{\cite[Theorem V.9.2]{MR1481706}}]\label{lem: duality of BM}
	Let $ X $ be a connected oriented smooth manifold of dimension $ n $. Then
	\begin{equation*}
		H^{BM}_{i}(X,\LL) \simeq H^{n-i}(X,\LL),
	\end{equation*}
	where $ H^{i}(X,\LL) $ is the $ i $-th singular cohomology on $ X $ of coefficients in $ \LL $.
\end{lem}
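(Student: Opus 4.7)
I would split the claim into its two constituent isomorphisms and handle them separately, since they are classical Poincar\'e-type dualities of somewhat different character.

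For the first isomorphism $H^{BM}_i(X,\LL) \simeq (H^i_c(X,\LL^\vee))^\vee$, the natural approach is to construct a perfect evaluation pairing at the chain level. Given a locally finite chain $c \in C^{BM}_i(X,\LL)$ and a compactly supported cochain $\phi \in C^i_c(X,\LL^\vee)$, only finitely many simplices of $c$ lie in the support of $\phi$, so $\langle c,\phi\rangle \in \CC$ is a finite sum (using the evaluation $\LL \otimes \LL^\vee \to \underline{\CC}$); this pairing is compatible with the boundary and coboundary maps, hence descends to a bilinear map on (co)homology. To prove perfectness I would run an induction along a good open cover of $X$: for $X = \RR^n$ with the trivial local system both groups are one-dimensional (in degrees $n$ and $0$ respectively, by Example~\ref{example: BM of n-cells}) and the pairing is manifestly nondegenerate; in the inductive step one applies the five-lemma to the localization sequence of Lemma~\ref{lem: localization sequence of BM} on the Borel--Moore side paired with its compactly supported cohomology counterpart, the two being linked by the chain-cochain pairing just constructed.

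For the second isomorphism $(H^i_c(X,\LL^\vee))^\vee \simeq (H_{n-i}(X,\LL^\vee))^\vee$ it suffices to exhibit $H^i_c(X,\LL^\vee) \simeq H_{n-i}(X,\LL^\vee)$ and then take $\CC$-linear duals. This is classical Poincar\'e duality with local coefficients on an oriented manifold: since $X$ is oriented of dimension $n$ there is a fundamental class $[X] \in H^{BM}_n(X,\underline{\CC})$, and cap product with $[X]$ delivers the required isomorphism; the same good-cover induction as above also gives a self-contained proof here.

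The main obstacle is perfectness of the chain-level pairing in the first step, which genuinely uses that the coefficients lie in a field (so that no $\mathrm{Ext}$-terms intervene in the universal coefficient comparison) and that the local system on each contractible member of the good cover is trivial. The quickest way to bypass the good-cover gymnastics altogether is to reinterpret the first isomorphism as the cohomological shadow of Verdier duality $\DD(\LL^\vee) \simeq \omega_X \otimes \LL$ for a local system $\LL$ on the topologically nice space $X$, from which both isomorphisms in the lemma follow by taking (hyper)cohomology.
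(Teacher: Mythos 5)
The paper offers no proof of this lemma at all: it is introduced as ``a corollary of generalized Poincar\'e duality'' and used as a black box, so there is no argument of the authors' to compare yours against step by step. Your sketch is a correct outline of the standard proof, and your closing remark --- deducing both isomorphisms at once from Verdier duality $\DD(\LL^{\vee})\simeq \LL\otimes\omega_{X}$ together with $\omega_{X}\simeq\underline{\CC}[n]$ on an oriented $n$-manifold --- is in effect the one-line justification the paper has in mind and is the cleanest route. Two caveats on the more hands-on version. First, the induction should run on the Mayer--Vietoris sequence for open covers rather than on the localization sequence of Lemma~\ref{lem: localization sequence of BM}: in a closed/open decomposition the closed piece $F$ need not be a manifold of the right dimension, so the inductive hypothesis cannot be applied to it, whereas finite intersections of members of a good cover are again contractible open sets to which the base case applies. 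Second, a finite good cover exists only for manifolds of finite type; for a general connected oriented manifold one must add the usual Bott--Tu limiting argument, and in the first isomorphism one should be slightly careful with $\varprojlim^{1}$ terms and double duals when the compactly supported cohomology groups are infinite dimensional (over a field, and for the finite-type spaces actually used in the paper --- cells, punctured $\CC\PP^{1}$, arrangement complements --- these issues are vacuous). With those adjustments your plan is complete, and the Verdier-duality shortcut you mention is the version I would keep.
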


The following lemma compares Borel--Moore homology with singular homology when $ X $ admits a good compactification.
\begin{lem}\label{lem: compare BM with singular}
	Let $ X $ be a smooth connected complex manifold. Assume that there exists a compactification $ \widetilde{X} $ of $ X $ such that 
	\begin{enumerate}
		\item $ \widetilde{X} $ is smooth and complete,
		\item $ D = \widetilde{X}-X $ is a normal crossing divisor with smooth irreducible components, and
		\item the monodromy of $ \LL $ around each component of $ D $ is non-trivial.
	\end{enumerate}
	Then the natural maps
	\begin{equation*}
		H_{*}(X,\LL) \stackrel{\simeq}{\longrightarrow} H^{BM}_{*}(X,\LL)
	\end{equation*}
	are isomorphisms.
\end{lem}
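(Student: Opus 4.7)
The plan is to use Poincar\'e duality to reduce the claim to showing that the natural morphism $j_{!}\LL \to Rj_{*}\LL$ on the compactification is a quasi-isomorphism, where $j \colon X \hookrightarrow \widetilde{X}$ is the open immersion, and then to verify this by a stalk computation along $D$. By Poincar\'e duality on the oriented manifold $X$ (complex dimension $d$, real dimension $2d$), one has natural isomorphisms $H_{i}(X, \LL) \simeq H_{c}^{2d-i}(X, \LL)$ and $H^{BM}_{i}(X, \LL) \simeq H^{2d-i}(X, \LL)$, compatible in the sense that the comparison map $H_{i}(X, \LL) \to H^{BM}_{i}(X, \LL)$ corresponds to the forget-supports map $H_{c}^{2d-i}(X, \LL) \to H^{2d-i}(X, \LL)$. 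Writing $H^{*}_{c}(X, \LL) = H^{*}(\widetilde{X}, j_{!}\LL)$ and $H^{*}(X, \LL) = H^{*}(\widetilde{X}, Rj_{*}\LL)$, the forget-supports map is induced by the canonical morphism $j_{!}\LL \to Rj_{*}\LL$ in $D^{b}(\widetilde{X})$, so it suffices to show that this morphism is a quasi-isomorphism. Since $j_{!}\LL$ has zero stalks along $D$ by definition, I am reduced to proving $(Rj_{*}\LL)_{p} = 0$ for every $p \in D$.

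This is a purely local calculation. Near any $p \in D$, the normal crossing assumption provides local coordinates $(z_{1}, \ldots, z_{d})$ on a polydisk neighborhood $U \cong \Delta^{d}$ such that $D \cap U$ is defined by $z_{1} \cdots z_{k} = 0$ for some $k \geq 1$, so $U \cap X \cong (\Delta^{*})^{k} \times \Delta^{d-k}$. Its fundamental group $\ZZ^{k}$ is free abelian, hence the restriction of the rank-one local system $\LL$ decomposes as an exterior tensor product $\LL_{1} \boxtimes \cdots \boxtimes \LL_{k} \boxtimes \underline{\CC}$, where $\LL_{i}$ is the rank-one local system on $\Delta^{*}$ whose monodromy $\lambda_{i}$ is that of $\LL$ around the component $D_{i}$, non-trivial by hypothesis. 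For $U$ small enough, K\"unneth gives
\begin{equation*}
    (Rj_{*}\LL)_{p} \;\simeq\; H^{*}(U \cap X, \LL) \;\simeq\; \bigotimes_{i=1}^{k} H^{*}(\Delta^{*}, \LL_{i}).
\end{equation*}
Since $\Delta^{*} \simeq S^{1}$, each factor $H^{*}(\Delta^{*}, \LL_{i})$ is computed by the two-term complex $\CC \xrightarrow{\lambda_{i} - 1} \CC$, which is acyclic as $\lambda_{i} \neq 1$. Hence the tensor product vanishes and the stalk is zero, as required.

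The step I expect to require the most care is the compatibility statement in the first paragraph, namely identifying the natural comparison map $H_{*}(X, \LL) \to H^{BM}_{*}(X, \LL)$ with the forget-supports map $H^{*}_{c} \to H^{*}$ at the level of morphisms and not merely of objects. The remaining ingredients---the reduction to stalk vanishing, the K\"unneth decomposition (valid because $\pi_{1}$ of $U \cap X$ is abelian and free), and the vanishing of cohomology of a non-trivial rank-one local system on the circle---are standard.
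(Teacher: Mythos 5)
Your proof is correct and takes essentially the same route as the paper's: both reduce, via Poincaré duality, to showing that the canonical map $j_{!}\LL \to Rj_{*}\LL$ (the paper works with $\LL^{\vee}$) is a quasi-isomorphism on $\widetilde{X}$, hence that $H^{*}_{c}$ and $H^{*}$ agree. The only difference is that the paper asserts this quasi-isomorphism directly from the hypotheses, while you verify it by the local K\"unneth/stalk computation on $(\Delta^{*})^{k}\times\Delta^{d-k}$; that verification is correct.
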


\begin{proof}
	The assumptions ensures that $ Rj_{!}\LL \simeq Rj_{*}\LL $, where $ j \colon X \hookrightarrow \widetilde{X} $ is the open embedding. Then we have $ H_{c}^{*}(X,\LL) \simeq \HH^{*}(\widetilde{X},Rj_{!}\LL) \simeq \HH^{*}(\widetilde{X},Rj_{*}\LL) \simeq H^{*}(X,\LL) $. So by Lemma \ref{lem: duality of BM} we have $ H_{*}(X,\LL) \simeq H^{BM}_{*}(X,\LL) $.
\end{proof}

\begin{ex}\label{example: punctured P1}
	Let $ F \subset \CC\PP^{1} $  be a finite set of points in $ \CC\PP^{1} $. Let $ \LL $ be a nontrivial rank-one $ \mathbb{K} $-local system on $ E^{\circ} = \CC\PP^{1}\setminus F $ with monodromy map $ m \colon F \rightarrow \mathbb{K}^{\times} $. Then we can compute $ H^{BM}_{*}(E^{\circ},\LL) $ as follows.
	
	Up to a homeomorphism, we may assume that $ F $ is contained in $ \RR \subset \CC\PP^{1} $. Suppose that $ F = \{p_{1},\cdots,p_{k}\} $, where $ p_{1} < p_{2} < \cdots < p_{k} $. Denote by $ \sigma_{i} $ the open segment $ (p_{i},p_{i+1}) $, where $ p_{k+1} = p_{1} $ and $ \sigma_{k} $ contain the infinity point. Denote by $ \Delta^{+} $ (resp. $ \Delta^{-} $) the open $ 2 $-cell given by the upper (resp. lower) half plane in $ E^{\circ} $, endowed with the clockwise orientation. 
	
	Choose a non-trivial global section $ e^{+} $ of $ \LL $ on the closure $ \overline{\Delta^{+}} $. Extend $ e^{+}|_{[0:1]} $ from $ [0:1] $ to a global section of $ \LL $ on $ \overline{\Delta^{-}} $, denoted by $ e^{-} $. Then for any $ 1 \leq i \leq k $, we have
 	\begin{equation*}
 		e^{-}|_{\sigma_{i}} = m(p_{1})\cdots m(p_{i})e^{+}|_{\sigma_{i}}.
 	\end{equation*}
 	
 	By Lemma \ref{lem: duality of BM}, we have $ \dim H^{BM}_{2}(E^{\circ}, \LL) = \dim H^{0}(E^{\circ}, \LL) = 0 $ and $ \dim H^{BM}_{0}(E^{\circ}, \LL) = \dim H^{2}(E^{\circ}, \LL) = 0 $. And by Lemma \ref{lem: localization sequence of BM} we have the following short exact sequence
 	\begin{equation*}
 		0 \longrightarrow H^{BM}_{2}(\Delta^{+},\LL) \oplus H^{BM}_{2}(\Delta^{-},\LL) \longrightarrow \bigoplus\limits_{i=1}^{k} H^{BM}_{1}(\sigma_{i},\LL) \longrightarrow H^{BM}_{1}(E^{\circ}, \LL) \longrightarrow 0.
 	\end{equation*}
 	
 	Combining with Example \ref{example: BM of n-cells}, we have $ [\sigma_{i}\otimes e^{+}] (1 \leq i \leq k) $ span $ H^{BM}_{1}(E^{\circ}, \LL) $, with all relations among them given by 
 	\begin{equation*}
 		\partial[\Delta^{+}\otimes e^{+}] = -\sum\limits_{i=1}^{k}[\sigma_{i}\otimes e^{+}]
 	\end{equation*}
	and
	\begin{equation*}
		\partial[\Delta^{-}\otimes e^{-}] = \sum\limits_{i=1}^{k}[\sigma_{i}\otimes e^{-}] = \sum\limits_{i=1}^{k}m(p_{1})\cdots m(p_{i})[\sigma_{i}\otimes e^{+}].
	\end{equation*} 	
\end{ex}

\subsection{Computing the first homology of line arrangement complements}\label{subsection: compute H1}

Given a line arrangement $ \calA $, let $ \pi \colon Y \rightarrow \CC\PP^{2} $ be the blowing-up at all points $ p \in L_{2}(\calA) $ with $ \mult(p) > 2 $. For such points $ p $, let $ E_{p} $ denote the exceptional divisor at $ p $. Denote by $ E_{p}^{\circ} = E_{p} - \bigcup_{l \in \calA_{p}} l $, where $ \calA_{p} $ is the set of lines in $ \calA $ containing the point $ p $. Then $ \pi^{-1}\calA $ defines a normal crossing divisor on $ Y $ whose irreducible components are the strict transforms of lines in $ \calA $ and the exceptional divisors $ E_p $. The complement of this divisor in $ Y $ is still $ M(\calA) $.

Let $ \LL $ be a rank-one $ \mathbb{K} $-local system on $ M(\calA) $ with monodromy map $ m \colon \calA \rightarrow \mathbb{K}^{\times} $ satisfying that $ m(l) \neq 1 $ for each $ l \in \calA $. Then the monodromy of $ \LL $ around $ E_{p} $ is exactly $ \prod_{l \in \calA_{p}} m(l) $. Let $ R $ denote the set of resonant points with respect to $ \LL $. By definition, the monodromy of $ \LL $ around $ E_{p} $ is trivial if and only if $ p \in R $. Thus the local system $ \LL $ can be extended to one on the open set 
\begin{equation*}
    \widetilde{M} =  M(\calA) \cup  (\bigcup\limits_{p \in R}E_{p}^{\circ}),
\end{equation*}
which we still denote by $ \LL $. 

The following lemma implies that the first cohomology remains invariant under this extension.

\begin{lem}\label{lem: H1 remains invariant under extension}
    The inclusion map induce an isomorphism
    \begin{equation*}
        H_{1}(M(\calA),\LL) \stackrel{\simeq}{\longrightarrow} H_{1}(\widetilde{M},\LL).
    \end{equation*}
\end{lem}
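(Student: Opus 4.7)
My plan is to run the long exact sequence of the pair $(\widetilde{M}, M(\calA))$ with coefficients in $\LL$ and reduce the claim to the vanishing of both $H_1(\widetilde{M}, M(\calA); \LL)$ and $H_2(\widetilde{M}, M(\calA); \LL)$. I would compute these relative groups by means of the Thom isomorphism with local coefficients.

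First I would set up the geometry. The complement of $\widetilde{M}$ in $Y$ is the closed divisor $\widetilde{\calA} \cup \bigcup_{p \notin R} E_p$, so $\widetilde{M}$ is open in $Y$; since $M(\calA)$ is also open in $Y$, it is open in $\widetilde{M}$. Hence $F := \widetilde{M} \setminus M(\calA) = \bigsqcup_{p \in R} E_p^\circ$ is closed in $\widetilde{M}$, and each component is an open subset of the smooth rational curve $E_p$, so it sits inside the complex surface $\widetilde{M}$ as a smooth real-codimension-$2$ submanifold whose normal bundle is a complex line bundle, hence canonically oriented.

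Next I would apply the Thom isomorphism componentwise, which is legitimate because $\LL$ is defined on all of $\widetilde{M}$ (not only on $M(\calA)$):
\[
H_i(\widetilde{M}, M(\calA); \LL) \;\simeq\; H_{i-2}(F, \LL|_F) \;=\; \bigoplus_{p \in R} H_{i-2}(E_p^\circ, \LL|_{E_p^\circ}).
\]
For $i = 1$ this vanishes since $H_{-1} = 0$. For $i = 2$ I would use the hypothesis $m(l) \neq 1$: around every puncture $\widetilde{l} \cap E_p$ of the open Riemann surface $E_p^\circ$ the monodromy of $\LL|_{E_p^\circ}$ is $m(l) \neq 1$, so the restriction is a nontrivial rank-one local system on the connected space $E_p^\circ$, and thus $H_0(E_p^\circ, \LL|_{E_p^\circ}) = 0$. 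Feeding these vanishings into the fragment
\[
H_2(\widetilde{M}, M(\calA); \LL) \to H_1(M(\calA), \LL) \to H_1(\widetilde{M}, \LL) \to H_1(\widetilde{M}, M(\calA); \LL)
\]
sandwiches the middle map between two zero groups and delivers the asserted isomorphism.

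The step I expect to require the most care is justifying the Thom isomorphism with $\LL$-coefficients in this mildly non-compact setting. I would handle it by choosing a tubular neighborhood $N$ of $F$ in $\widetilde{M}$, using excision to rewrite $H_*(\widetilde{M}, M(\calA); \LL) \simeq H_*(N, N \setminus F; \LL|_N)$, deformation retracting $N$ onto $F$, and then invoking the Thom class of the complex normal line bundle to produce the degree shift by $2$. The remaining input is purely bookkeeping about which monodromies around components of $F$ are trivial.
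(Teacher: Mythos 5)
Your argument is correct and is essentially the paper's: the paper applies the Gysin sequence for the codimension-two submanifold $\bigsqcup_{p\in R}E_p^{\circ}\subset\widetilde{M}$ to $\LL^{\vee}$ in cohomology and then dualizes, killing the relevant term via $H^{0}(E_p^{\circ},\LL^{\vee})=0$, which is exactly the dual of your Thom-isomorphism computation of the relative groups and the vanishing $H_{0}(E_p^{\circ},\LL|_{E_p^{\circ}})=0$ from $m(l)\neq 1$. Working covariantly as you do merely saves the final duality step; the geometric input is identical.
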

\begin{proof}
    Since $ \bigsqcup_{p \in R}E_{p}^{\circ} $ is a smooth submanifold of $ \widetilde{M} $, we have the following Gysin sequence:
	\begin{equation*}
		0 \longrightarrow H^{1}(\widetilde{M}, \LL^{\vee}) \longrightarrow H^{1}(M(\calA), \LL^{\vee}) \longrightarrow \bigoplus\limits_{p \in R}  H^{0}(E_{p}^{\circ},\LL^{\vee}) \longrightarrow H^{2}(\widetilde{M}, \LL^{\vee}).
	\end{equation*}
	For each $ p \in R $, since the restriction of $ \LL^{\vee} $ on $ E_{p}^{\circ} $ is nontrivial, we have $ H^{0}(E_{p}^{\circ},\LL^{\vee}) = 0 $. So $ H^{1}(\widetilde{M}, \LL^{\vee}) \simeq H^{1}(M(\calA), \LL^{\vee}) $. Thus we have $ H_{1}(M(\calA),\LL) \simeq H_{1}(\widetilde{M},\LL) $.
\end{proof}
\begin{rmk}
    The same method can be used to prove a similar statement in a more general context. Let $ X $ be a compact smooth complex surface with a simple normal crossing divisor $ D $ with smooth components. Denote by $ U = X \setminus D $ the complement of the divisor $ D $. Let $ \LL $ be a rank-one $ \mathbb{K} $-local system on $ U $. Decompose the divisor $ D $ into $ D_{0}+E $, where $ D_{0} $ is the sum of the components of $ D $ with nontrivial monodromy of $ \LL $, and $ E $ consists of those with trivial monodromy. Then the local system $ \LL $ can be extended to one on the open set $ \widetilde{U} = 
    X \setminus D_{0} $, which we still denote by $ \LL $. Assume that each component of $ E $ has nonempty intersection with $ D_{0} $. By successively adding the components of $ E $ to $ U $, one can prove that $ H_{1}(U,\LL) \simeq H_{1}(\widetilde{U},\LL) $.
\end{rmk}

The following key lemma provides a description of $ H_{1}(M(\calA),\LL)  $.   

\begin{lem}\label{lem: H1 is given by exceptional divisors}
	There exists a surjective $ \mathbb{K} $-linear map 
	\begin{equation*}
		\bigoplus\limits_{p \in R}H^{BM}_{1}(E_{p}^{\circ},\LL) \twoheadrightarrow  H_{1}(M(\calA),\LL)  
	\end{equation*}
whose kernel is given by the image of the residue map
	\begin{equation*}
		 H^{BM}_{2}(M(\calA),\LL)\longrightarrow \bigoplus\limits_{p \in R}  H^{BM}_{1}(E_{p}^{\circ},\LL).
	\end{equation*}
\end{lem}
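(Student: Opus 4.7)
The plan is to apply the Borel--Moore localization sequence of Lemma \ref{lem: localization sequence of BM} to the triple $(\widetilde{M}, F, M(\calA))$, where $F = \bigsqcup_{p\in R} E_p^\circ$ sits inside $\widetilde{M}$ as a closed subset with open complement $M(\calA)$. Once I feed in the appropriate vanishings at both ends, this long exact sequence will collapse into a four-term sequence whose connecting homomorphism is precisely the residue map appearing in the statement and whose right-most non-zero term can be identified with $H_1(M(\calA),\LL)$.

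The first input is the calculation of the ends $H^{BM}_0(E_p^\circ,\LL)$ and $H^{BM}_2(E_p^\circ,\LL)$. For each $p \in R$, the exceptional divisor $E_p \simeq \CC\PP^1$ meets the strict transforms of the $\mult(p)$ lines in $\calA_p$ transversally, so $E_p^\circ$ is $\CC\PP^1$ with $\mult(p)$ punctures, and the monodromy of $\LL|_{E_p^\circ}$ around each puncture is $m(l) \neq 1$. Thus $\LL|_{E_p^\circ}$ is non-trivial, and Example \ref{example: punctured P1} gives $H^{BM}_0(E_p^\circ,\LL) = H^{BM}_2(E_p^\circ,\LL) = 0$. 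The localization sequence therefore truncates to
\begin{equation*}
0 \to H^{BM}_2(\widetilde{M},\LL) \to H^{BM}_2(M(\calA),\LL) \xrightarrow{\mathrm{res}} \bigoplus_{p\in R} H^{BM}_1(E_p^\circ,\LL) \to H^{BM}_1(\widetilde{M},\LL) \to H^{BM}_1(M(\calA),\LL) \to 0.
\end{equation*}

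The second input is an identification of $H^{BM}_1(\widetilde{M},\LL)$ with $H_1(M(\calA),\LL)$. I would apply Lemma \ref{lem: compare BM with singular} to $\widetilde{M} \subset Y$: the boundary $Y\setminus \widetilde{M}$ is a simple normal crossing divisor with smooth components consisting of the strict transforms $\widetilde{l}$ for $l \in \calA$ and the exceptional divisors $E_p$ for non-resonant multiple points $p$; the monodromy of $\LL$ is $m(l)\neq 1$ around each $\widetilde{l}$ and $\prod_{l\in\calA_p} m(l) \neq 1$ around each such $E_p$ by the definition of non-resonance. Hence $H^{BM}_1(\widetilde{M},\LL) \simeq H_1(\widetilde{M},\LL)$, and composing with Lemma \ref{lem: H1 remains invariant under extension} gives $H^{BM}_1(\widetilde{M},\LL) \simeq H_1(M(\calA),\LL)$.

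The third input is the vanishing $H^{BM}_1(M(\calA),\LL) = 0$, needed to promote the right-most map in the displayed exact sequence to the surjection $\bigoplus H^{BM}_1(E_p^\circ,\LL) \twoheadrightarrow H_1(M(\calA),\LL)$. Choosing any line as the line at infinity exhibits $M(\calA)$ as a smooth complex affine surface, so by the Andreotti--Frankel theorem it has the homotopy type of a CW complex of real dimension at most $2$; in particular $H_3(M(\calA),\LL^\vee) = 0$, and Poincar\'e duality in the form of Lemma \ref{lem: duality of BM} gives $H^{BM}_1(M(\calA),\LL) \simeq H_3(M(\calA),\LL^\vee)^\vee = 0$. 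Combining the three inputs yields the desired surjection together with the description of its kernel as the image of the residue map. The step I expect to require the most care is the verification of the hypotheses of Lemma \ref{lem: compare BM with singular} on $\widetilde{M}$, since it is precisely the distinction between resonant and non-resonant exceptional divisors that forces one to work with the partial compactification $\widetilde{M}$ rather than directly with $M(\calA) \subset Y$.
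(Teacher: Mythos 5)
Your proposal is correct and follows essentially the same route as the paper: the Borel--Moore localization sequence for the pair $\bigsqcup_{p\in R}E_p^\circ \subset \widetilde{M}$, the identification $H^{BM}_1(\widetilde{M},\LL)\simeq H_1(M(\calA),\LL)$ via Lemmas \ref{lem: compare BM with singular} and \ref{lem: H1 remains invariant under extension}, and the vanishing of $H^{BM}_1(M(\calA),\LL)$ by duality. The only differences are that you additionally verify the end-term vanishings $H^{BM}_0(E_p^\circ,\LL)=H^{BM}_2(E_p^\circ,\LL)=0$ (not strictly needed once $H^{BM}_1(M(\calA),\LL)=0$ is known) and you supply the Andreotti--Frankel justification for $H_3(M(\calA),\LL^\vee)=0$, which the paper leaves implicit.
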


\begin{proof}
	
	By Lemma \ref{lem: localization sequence of BM} we have the following long exact sequence:
	\begin{equation*}
		H^{BM}_{2}(M(\calA),\LL)\longrightarrow \bigoplus\limits_{p \in R}  H^{BM}_{1}(E_{p}^{\circ},\LL) \longrightarrow H^{BM}_{1}(\widetilde{M},\LL) \longrightarrow H^{BM}_{1}(M(\calA),\LL).
	\end{equation*}
	
	Note that the monodromy of $ \LL $ around each component of $ Y \setminus \widetilde{M} $ is non-trivial. So by Lemma \ref{lem: compare BM with singular} and \ref{lem: H1 remains invariant under extension}, we have $ H^{BM}_{1}(\widetilde{M},\LL) \simeq H_{1}(\widetilde{M},\LL) \simeq H_{1}(M(\calA),\LL) $.  Furthermore, by Lemma \ref{lem: duality of BM}, $ \dim H^{BM}_{1}( M(\calA),\LL) = \dim H^{3}(M(\calA),\LL) = 0 $. Therefore, the exact sequence above yields a surjective map
	\begin{equation*}
		\bigoplus\limits_{p \in R}H^{BM}_{1}(E_{p}^{\circ},\LL) \twoheadrightarrow  H_{1}(M(\calA),\LL)  
	\end{equation*}
whose kernel is the image of the residue map
	\begin{equation*}
		 H^{BM}_{2}(M(\calA),\LL)\longrightarrow \bigoplus\limits_{p \in R}  H^{BM}_{1}(E_{p}^{\circ},\LL).
	\end{equation*}
\end{proof}

\section{An algorithm for real line arrangements}\label{section: main construction}

In this section, we consider the case where $ \calA $ is a complexified real line arrangement. In this case, the real figure of $ \calA $ allows us to describe the topology of $ M(\calA) $ in a more combinatorial way. Using Lemma \ref{lem: H1 is given by exceptional divisors}, we generate $ H_{1}(M(\calA),\LL) $ via angles at resonant points and describe the relations among them using bounded chambers.

\subsection{Settings and convention}\label{subsection: setting}
Fixing homogeneous coordinates $ [x:y:z] $ on $ \mathbb{CP}^2 $, let $ \calA $ be a complexified real line arrangement in $ \CC\PP^{2} $; that is, every line in $ \calA $ can be defined by a real linear form.  We keep the notation $\LL,m,R,E_{p},E_{p}^{\circ},\widetilde{M} $ from subsection \ref{subsection: compute H1}.

Let $ l_{\infty} $ be a generic real line in $ \CC\PP^{2} $; that is, $ l_{\infty} $ is defined by a real linear form, with $ l_{\infty} \cap L_{2}(\calA) = \emptyset $. Then, by taking $ l_{\infty} $ as the line at infinity $ \{z=0\} $ and applying a further real coordinate transformation, we may assume that the slopes of the lines in $ \calA $ are distinct finite numbers. Identify the affine chart $ \{[x:y:1] \mid x,y \in \CC\} $ with $ \CC^{2} $, equipped with the coordinates $ x,y $. Then each $ l \in \calA $ can be defined by an equation
\begin{equation*}
	Q_{l}(x,y) = y -s(l)x - b(l) = 0,
\end{equation*}
where the slopes $ s(l) \in \RR $ are distinct and $ b(l) \in \RR $. 

 Denote by
\begin{equation*}
    \overline{\HH^{+}} = \{p \in \CC^{2} \mid x(p) \in \RR,\ \operatorname{Im}(y(p)) > 0\} \cup (\RR^{2}\cap M(\calA)) \subset M(\calA).
\end{equation*}
 
 Over the simply connected set $ \overline{\HH^{+}} $, fix a nonzero section $ e $ of $ \LL $ as a global marking. For each $ p \in R $, note that the exceptional divisor $ E_{p} $ is isomorphic to $ \CC\PP^{1} $, equipped with the homogeneous coordinates $ [x:y] $ inherited from $ \CC\PP^{2} $.  Under this identification, we have
\begin{equation*}
	E_{p}^{\circ} = \CC\PP^{1} -  \{[1:s(l)] \mid l \in \calA_{p}\}.
\end{equation*}
We further denote by
\begin{equation*}
    \overline{\HH_{p}^{+}} = \{[x:y] \in E_{p}^{\circ} \mid \operatorname{Im}(\bar{x}y) \geq 0\} \subset E_{p}^{\circ}
\end{equation*}
and extend $ e $ to a global section of $ \LL $ over the simply connected set $ \overline{\HH_{p}^{+}} $ as follows. First extend $ e $ along the segment 
\begin{equation*}
	\gamma_{p} \colon [0,1] \rightarrow \widetilde{M},\ \gamma_{p}(t) = (x(p),y(p)+(1-t)i) \in \overline{\HH^{+}},\ \forall t \in [0,1);\ \gamma_{p}(1) = [0:1] \in \overline{\HH_{p}^{+}}.
\end{equation*}
Then extend it further from $ [0:1] $ to a global section over $ \overline{\HH_{p}^{+}} $, denoted by $ e_{p} $.

\subsection{Generators given by angles and Relations given by bounded chambers}\label{subsection: generators and relations}	
	For each $ p \in R $, the space $ H^{BM}_{1}(E_{p}^{\circ},\LL) $ can be described in terms of angles at $ p $. These angles are defined as follows.
	\begin{defn}\label{def: angles}
		 Given a resonant point $ p \in R $, recall that $ E_{p} \simeq \CC\PP^{1} $ is equipped with the coordinates $ [x:y] $ inherited from $ \CC\PP^{2} $. An angle at $ p $ is defined to be a connected component of $ \RR\PP^{1} \cap E_{p}^{\circ} $. Define $ A(p) $ to be the $ \mathbb{K} $-vector space spanned by angles at $ p $. Let $ A = \bigoplus_{p \in R} A(p) $ be the space spanned by all angles of $ \calA $.
	\end{defn}

	More explicitly, suppose that $ \calA_{p} = \{ l_{1},\cdots,l_{k} \}$, where $ s(l_{1}) < \cdots < s(l_{k}) $. Then each angle $ \alpha $ at $ p $ is given by an open segment in $ \RR\PP^{1} $ with endpoints $ [1:s(l_{i})] $ and $ [1:s(l_{i+1})] $ for some $ i $, where $ l_{k+1} = l_{1} $. We represent this angle by the pair $ (l_{i},l_{i+1}) $ and endow it with an orientation from  $ [1:s(l_{i})] $ to $ [1:s(l_{i+1})] $. See Figure \ref{fig:1}.

    \begin{figure}[H]
 	\centering
 	\includegraphics[width=0.7\linewidth]{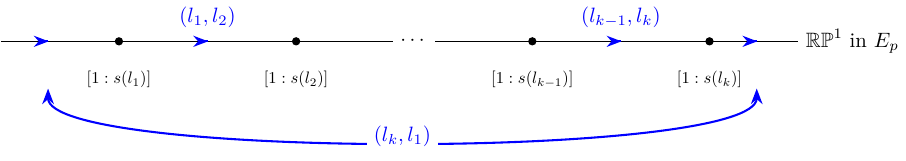}
 	\caption[1]{The angles at $ p $}
 	\label{fig:1}
 \end{figure}
	
	Recall that we have defined a nonzero global section $ e_{p} $ of $ \LL $ over $ \overline{\HH_{p}^{+}} \supset \RR\PP^{1} \cap E_{p}^{\circ} $. Then for each angle $ \alpha $, $ \alpha \otimes e_{p} $ defines a locally finite $ 1 $-cycle on $ E_{p}^{\circ} $ with coefficients in $ \LL $, which yields a $ \mathbb{K} $-linear map
	\begin{equation*}
		\begin{array}{rccc}
			f_{p} \colon & A(p) & \longrightarrow & H^{BM}_{1}(E_{p}^{\circ},\LL) \\
			& \alpha & \mapsto & [\alpha \otimes e_{p}]
		\end{array}
	\end{equation*}

	The following lemma is a direct corollary of Example \ref{example: punctured P1}. 
	\begin{lem}\label{lem: generators for CP1}
		The $ \mathbb{K} $-linear map $ f_{p} $ is surjective, whose kernel is spanned by 
		\begin{equation*}
			\alpha(p)^{+} =	\sum\limits_{i=1}^{k} (l_{i},l_{i+1})
		\end{equation*}
		and 
		\begin{equation*}
			\alpha(p)^{-} = \sum\limits_{i=1}^{k} m(l_{1})\cdots m(l_{i})(l_{i},l_{i+1}).
		\end{equation*}
	\end{lem}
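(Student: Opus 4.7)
The plan is to reduce the statement directly to Example~\ref{example: punctured P1}, applied to the punctured projective line $E_p^{\circ} \subset E_p \simeq \CC\PP^{1}$. The entire content of the lemma is a careful matching of the data of the two setups, after which the generators and relations are read off.

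First I would translate the geometric data. In the affine coordinate $t = y/x$ on $E_p \setminus \{[0:1]\}$, the circle $\RR\PP^{1} \subset E_p$ becomes $\RR \cup \{\infty\}$ and the punctures $[1:s(l_{i})]$ become the real numbers $s(l_{1}) < \cdots < s(l_{k})$. Under this identification the angles at $p$, oriented from $[1:s(l_{i})]$ to $[1:s(l_{i+1})]$, correspond precisely to the open intervals $\sigma_{i}$ of Example~\ref{example: punctured P1}, with $\sigma_{k}$ passing through the point at infinity $[0:1]$. Moreover $\HH_{p}^{+}$ becomes the upper half-plane in this chart, and by construction $e_{p}$ is a nonzero section over its closure, so $e_{p}$ plays the role of the section $e^{+}$.

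Next I would verify the monodromy assignment. On the blow-up $Y$ the strict transform of $l_{i}$ meets $E_{p}$ transversally at $[1:s(l_{i})]$, so a small loop around this puncture inside $E_{p}^{\circ}$ is freely homotopic in $\widetilde{M}$ to a meridian of $l_{i}$; hence the local monodromy of $\LL|_{E_{p}^{\circ}}$ at $[1:s(l_{i})]$ equals $m(l_{i})$. Applying Example~\ref{example: punctured P1} with these identifications then shows that $f_{p}$ is surjective, and that its kernel is spanned by the two boundary relations
\begin{align*}
    \partial[\overline{\HH_{p}^{+}}\otimes e_{p}] &= -\sum_{i=1}^{k}(l_{i},l_{i+1}) = -\alpha(p)^{+}, \\
    \partial[\overline{\HH_{p}^{-}}\otimes e_{p}^{-}] &= \sum_{i=1}^{k} m(l_{1})\cdots m(l_{i})\,(l_{i},l_{i+1}) = \alpha(p)^{-},
\end{align*}
where $e_{p}^{-}$ is the analogue of $e^{-}$, obtained by extending $e_{p}|_{[0:1]}$ over the closure of the lower half-plane $\HH_{p}^{-}$.

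The only delicate point is an orientation check: one must confirm that the orientation on the angle $(l_{i},l_{i+1})$ agrees with the boundary orientation induced on $\sigma_{i}$ from the clockwise-oriented $\Delta^{+}$ of Example~\ref{example: punctured P1}. Once this is pinned down, the two relations above are (up to sign) exactly $\alpha(p)^{\pm}$, and they are linearly independent because $m(l_{i})\neq 1$ for each $i$ forces some partial product $m(l_{1})\cdots m(l_{i})$ to differ from $1$; this matches the expected dimension $\dim A(p) - \dim H^{BM}_{1}(E_{p}^{\circ},\LL) = k - (k-2) = 2$ coming from the Euler characteristic of a $k$-punctured sphere together with Lemma~\ref{lem: duality of BM}. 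I do not anticipate a serious obstacle beyond this bookkeeping.
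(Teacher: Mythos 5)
Your proposal is correct and follows exactly the route the paper intends: the paper gives no separate argument for this lemma, declaring it a direct corollary of Example~\ref{example: punctured P1}, and your write-up simply supplies the coordinate identification $t=y/x$, the matching of angles with the segments $\sigma_i$ and of $e_p$ with $e^{+}$, and the verification that the local monodromy at $[1:s(l_i)]$ is $m(l_i)$ — all of which is the implicit content of that reduction. The orientation and independence checks you flag are indeed just bookkeeping and do not hide any gap.
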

	
	By collecting all $ f_{p} $ together, we obtain a surjective map
    \begin{equation*}
		f \colon A \twoheadrightarrow \bigoplus\limits_{p \in R}H^{BM}_{1}(E_{p}^{\circ},\LL).
	\end{equation*} 
    
    Composing $ f $ with the map in Lemma \ref{lem: H1 is given by exceptional divisors} yields a surjective map
	\begin{equation}\label{eqn: definition of F}
		F \colon A \twoheadrightarrow H_{1}( M(\calA),\LL).
	\end{equation} 

     By Lemma \ref{lem: H1 is given by exceptional divisors}, The kernel $ K $ of $ F $ arises from two parts. One part is the kernel of each $ f_{p} $, and the other part comes from the image of $ H^{BM}_{2}(M(\calA),\LL) $ under the residue map. To describe the space $ H^{BM}_{2}(M(\calA),\LL) $, we introduce the notion of chambers.
\begin{defn}\label{def: bounded chambers}
	  A chamber of $ \calA $ is a connected component of $ \RR^{2} \cap M(\calA) $. A chamber $ \Delta $ is called bounded if its area is finite. Let $ C $ denote the set consisting of all bounded chambers.
\end{defn}

By definition, each $ \Delta \in C $ is a closed subset of $ M(\calA) $ homeomorphic to an open $ 2 $-cell. Recall that we have fixed a nonzero global section $ e $ of $ \LL $ over $ \overline{\HH^{+}} \supset \Delta $. So with $ \Delta $ oriented clockwise, $ \Delta \otimes e $ defines a Borel--Moore homology class $ [\Delta \otimes e] $ in $ H^{BM}_{2}(M(\calA),\LL) $. The following lemma shows that these elements span the whole space. 
\begin{lem}\label{lem: generator of H_2}
	 Under the assumptions in subsection \ref{subsection: setting}, the space $ H^{BM}_{2}(M(\calA),\LL) $ is spanned by $ [\Delta \otimes e](\Delta \in C) $. 
\end{lem}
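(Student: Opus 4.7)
The plan is to apply the localization sequence of Lemma~\ref{lem: localization sequence of BM} to the closed embedding of the real locus
\begin{equation*}
    F := M(\calA) \cap \RR\PP^2 \hookrightarrow M(\calA),
\end{equation*}
with open complement $U := M(\calA) \setminus F$, which yields
\begin{equation*}
    H^{BM}_2(F, \LL) \longrightarrow H^{BM}_2(M(\calA), \LL) \longrightarrow H^{BM}_2(U, \LL),
\end{equation*}
and to show that the first arrow is surjective with image spanned by bounded chamber classes.

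First I would describe $H^{BM}_2(F, \LL)$. The real locus $F$ admits a natural CW structure: its open 2-cells are the chambers of $\calA^{\RR}$ in $\RR\PP^2$ (both bounded in $\RR^2$ and unbounded), its open 1-cells are the $\#\calA$ arcs on $l_\infty^{\RR}\cap M(\calA)$, and it has no 0-cells since the real intersection points of $\calA$ are removed. Stratifying $F$ by its 1-skeleton and applying Lemma~\ref{lem: localization sequence of BM} once more embeds $H^{BM}_2(F, \LL)$ into $\bigoplus_{\Delta} \CC \cdot [\Delta \otimes e_\Delta]$ (summed over projective chambers) as the kernel of the residue onto the 1-skeleton. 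For each $\Delta \in C$, one has $\partial \Delta \subset \calA^{\RR} \subset \CC\PP^2 \setminus M(\calA)$, so the residue vanishes and $[\Delta \otimes e]$ directly defines a class in $H^{BM}_2(F, \LL)$, which pushes forward to the desired class in $H^{BM}_2(M(\calA), \LL)$.

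Next I would verify that $H^{BM}_2(U, \LL) = 0$, which by exactness forces the map $H^{BM}_2(F, \LL) \to H^{BM}_2(M(\calA), \LL)$ to be surjective. The non-real part $U$ fibers in the affine chart over $\RR^2 \setminus \{0\}$ via $(x, y) \mapsto (\operatorname{Im} x, \operatorname{Im} y)$, with generic fibers $\RR^2$ and special fibers $\RR^2 \setminus l^{\RR}$ over the $\#\calA$ directions parallel to the lines of $\calA$; combining this fibration analysis with the duality of Lemma~\ref{lem: duality of BM} and the hypothesis $m(l) \neq 1$, one concludes the vanishing. Finally, for unbounded chamber combinations that lift to $H^{BM}_2(F, \LL)$ (in residue-cancelling combinations), the observation that $l_\infty \cap M(\calA) \simeq \CC\PP^1$ minus $\#\calA$ points has $H^{BM}_2 = 0$ by Example~\ref{example: punctured P1} allows one to push any BM 2-cycle supported near $l_\infty$ off that locus, so the image of these combinations in $H^{BM}_2(M(\calA), \LL)$ already lies in the span of bounded chamber classes.

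The main obstacle I anticipate is the vanishing $H^{BM}_2(U, \LL) = 0$: although $U$ is homotopically lower-dimensional over generic directions of the imaginary-part projection, the fiber degenerates over the $\#\calA$ special directions, and a careful Mayer--Vietoris or spectral sequence argument is needed to verify that no 2-cycle survives, using crucially the nontrivial monodromy $m(l) \neq 1$ around each line.
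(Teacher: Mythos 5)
Your overall template (a localization sequence from Lemma~\ref{lem: localization sequence of BM}, duality from Lemma~\ref{lem: duality of BM}, and Example~\ref{example: BM of n-cells} for the cell classes) is the right one, but you localize along the wrong closed set, and this creates two genuine gaps. First, the vanishing $H^{BM}_2(U,\LL)=0$ for $U=M(\calA)\setminus\RR\PP^2$ is not established, and you acknowledge this yourself. Your projection $(x,y)\mapsto(\operatorname{Im}x,\operatorname{Im}y)$ only covers the affine part $U\cap\CC^2$ (for which the stratification by sectors and special rays does give vanishing of $H^{BM}_2$, since all strata are unions of cells of dimension $3$ or $4$). But $U$ also contains $l_\infty\setminus\RR\PP^1$, which is a disjoint union of two open $2$-disks, closed in $U$; localizing along them shows $H^{BM}_2(U,\LL)$ is a quotient of $\CC^2$ by the image of a connecting map $H^{BM}_3(U\cap\CC^2,\LL)\to\CC^2$, and deciding whether that map is surjective requires producing explicit Borel--Moore $3$-cycles (such as $\{\operatorname{Im}x=0,\operatorname{Im}y>0\}$) and computing their boundaries at infinity with the twisted coefficients. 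This is precisely the step you flag as the ``main obstacle'' and do not resolve. Second, even granting that $H^{BM}_2(F,\LL)\to H^{BM}_2(M(\calA),\LL)$ is surjective, $H^{BM}_2(F,\LL)$ is generated by the classes of \emph{all} projective chambers, not just the bounded affine ones, so you still owe an argument that each unbounded chamber class lies in the span of the $[\Delta\otimes e]$, $\Delta\in C$. The remark about ``pushing a cycle supported near $l_\infty$ off that locus'' is not an argument: an unbounded chamber is a perfectly good closed $2$-cell of $M(\calA)$ defining a nonzero class in $H^{BM}_2(F,\LL)$, and what must be shown is a relation in $H^{BM}_2(M(\calA),\LL)$, not in $H^{BM}_2(F,\LL)$.

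The paper's proof sidesteps both issues by choosing the closed subset to be only $\bigsqcup_{\Delta\in C}\Delta$ (the bounded chambers, which are closed in $M(\calA)$). Its open complement $M_0$ then contains the entire non-real part \emph{and} all unbounded chambers, and Lemma~\ref{lem: deleting unbounded regions} shows $M_0$ deformation retracts onto $l_\infty^\circ$, whence $\dim H^{BM}_2(M_0,\LL)=\dim H^2(M_0,\LL^\vee)=\dim H^2(l_\infty^\circ,\LL^\vee)=0$ because $l_\infty^\circ$ is an open Riemann surface. The localization sequence then gives surjectivity of $\bigoplus_{\Delta\in C}H^{BM}_2(\Delta,\LL)\to H^{BM}_2(M(\calA),\LL)$ in one step. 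If you want to salvage your route, you would need to both complete the connecting-map computation at $l_\infty$ and add a separate argument expressing unbounded chamber classes in terms of bounded ones; at that point you have essentially reconstructed the deformation retraction onto $l_\infty^\circ$, so the paper's choice of stratification is the more economical one.
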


\begin{proof}
	We need the following well-known lemma.
	\begin{lem}\label{lem: deleting unbounded regions}
		Under the assumptions in subsection \ref{subsection: setting}, the open set $  M_{0} =  M(\calA)\setminus (\bigcup_{\Delta \in C}\Delta) $ deformation retracts onto $ l_{\infty}^{\circ} = l_{\infty} \cap  M(\calA) $.
	\end{lem}
	
	Then we have
	\begin{equation*}
		H^{2}( M_{0},\LL^{\vee}) = H^{2}(l_{\infty}^{\circ},\LL^{\vee}) = 0. 
	\end{equation*}
	
	By Lemma \ref{lem: duality of BM}, we have $ \dim H^{BM}_{2}(M_{0},\LL) = \dim H^{2}(M_{0},\LL^{\vee}) = 0 $. So by Lemma \ref{lem: localization sequence of BM} we have the following exact sequence:
		\begin{equation*}
	\bigoplus\limits_{\Delta \in C}H^{BM}_{2}(\Delta,\LL)\longrightarrow H^{BM}_{2}(M(\calA),\LL) \longrightarrow H^{BM}_{2}(M_{0},\LL) = 0.
\end{equation*}

As shown in Example \ref{example: BM of n-cells}, for each $ \Delta \in C $, the image of $ H^{BM}_{2}(\Delta,\LL) \rightarrow H^{BM}_{2}(M(\calA),\LL) $ is spanned by $ [\Delta \otimes e] $. Thus the surjectivity of the map $ \bigoplus_{\Delta \in C}H^{BM}_{2}(\Delta,\LL) \twoheadrightarrow H^{BM}_{2}(M(\calA),\LL)  $ implies the conclusion.
\end{proof}

For each chamber $ \Delta $, denote by $ V(\Delta) $ the set of its resonant vertices and $ \overline{\Delta} $ its closure of in $ \widetilde{M} $. Then for any $ p \in V(\Delta) $, the intersection $ \overline{\Delta} \cap E_{p}^{\circ} $ forms an angle at $ p $, which we denote by $ \alpha_{p}(\Delta) $. See Figure \ref{fig:2}

 \begin{figure}[H]
 	\centering
 	\includegraphics[width=0.7\linewidth]{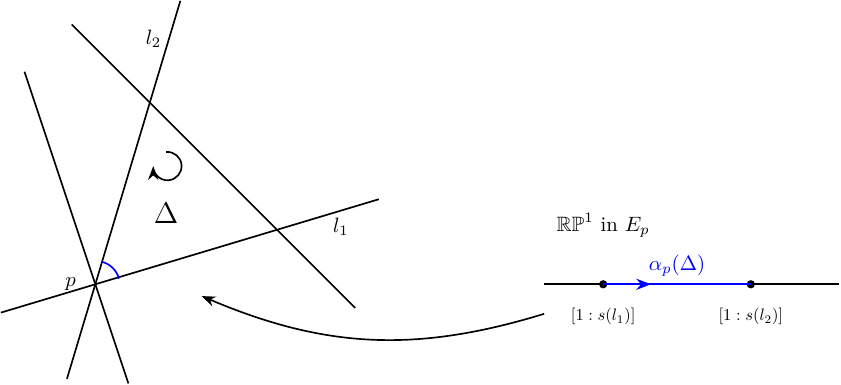}
 	\caption[1]{Definition of $ \alpha_{p}(\Delta) $}
 	\label{fig:2}
 \end{figure}

Denote by $ e(\overline{\Delta}) $ the extension to $ \overline{\Delta} $ of the section $ e|_{\Delta} $ over $ \Delta $. For each $ p \in V(\Delta) $, since $ \alpha_{p}(\Delta) $ is simply connected, there exists a complex number $ \lambda_{p}(\Delta) $ such that $ e(\overline{\Delta}) = \lambda_{p}(\Delta)e_{p} $ on $ \alpha_{p}(\Delta) $. When $ \Delta \in C $ is bounded, the following lemma describe the image of $ [\Delta \otimes e] $ under the residue map in terms of $ \lambda_{p}(\Delta) $ and $ \alpha_{p}(\Delta) $.  
	\begin{lem}\label{lem: residue of each bounded chamber}
    For a bounded chamber $ \Delta \in C $, define
    	\begin{equation*}
			\alpha(\Delta) = \sum\limits_{p \in V(\Delta)} \lambda_{p}(\Delta)\alpha_{p}(\Delta).
		\end{equation*}
        
		 Then the image of $ [\Delta \otimes e] $ under the residue map is $ f(\alpha(\Delta)) $. In particular, $ \alpha(\Delta) \in K $.
	\end{lem}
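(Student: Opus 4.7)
The plan is to unwind the residue map from Lemma \ref{lem: H1 is given by exceptional divisors}, applied to the open-closed decomposition $\widetilde{M} = M(\calA) \sqcup \bigsqcup_{p \in R} E_p^\circ$, and to compute directly the BM boundary of a natural lift of $[\Delta \otimes e]$ to $\widetilde{M}$.

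First I would identify the closure $\overline{\Delta}$ of $\Delta$ inside $\widetilde{M}$. In the blow-up $Y$, the strict transform of $\Delta$ is a closed topological $2$-disk whose topological frontier is a union of arcs on the strict transforms of the lines bounding $\Delta$, together with arcs on $E_q$ at each vertex $q$ with $\mult(q) > 2$. Since $\widetilde{M}$ contains neither the strict transforms of lines of $\calA$ nor $E_q^\circ$ for non-resonant $q$, intersecting with $\widetilde{M}$ removes every piece of this frontier except the arcs $\alpha_p(\Delta) = \overline{\Delta}\cap E_p^\circ$ for $p \in V(\Delta)$. Hence $\overline{\Delta} = \Delta \cup \bigsqcup_{p \in V(\Delta)} \alpha_p(\Delta)$ in $\widetilde{M}$.

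Next I would use that the residue map is the connecting homomorphism of the localization sequence. Concretely, $\Delta\otimes e$, although a BM $2$-cycle on $M(\calA)$, lifts tautologically to the BM $2$-chain $\Delta\otimes e$ on $\widetilde{M}$, and its BM boundary in $\widetilde{M}$ is by construction supported on the closed stratum $\bigsqcup_{p\in R} E_p^\circ$. By the first step, this boundary is in fact supported on $\bigsqcup_{p\in V(\Delta)} \alpha_p(\Delta)$. On each such angle the coefficient of the boundary chain is the restriction of the continuous extension $e(\overline{\Delta})$, which by the definition of $\lambda_p(\Delta)$ equals $\lambda_p(\Delta)\, e_p$. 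Therefore each $p\in V(\Delta)$ contributes $\lambda_p(\Delta)\,[\alpha_p(\Delta)\otimes e_p] = \lambda_p(\Delta)\, f_p(\alpha_p(\Delta))$ to the residue, up to a sign coming from the induced orientation.

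The main technical step will be verifying that the orientation induced on $\alpha_p(\Delta)$ from the clockwise orientation of $\Delta$ agrees with the prescribed orientation of $\alpha_p(\Delta)$ from $[1:s(l_i)]$ to $[1:s(l_{i+1})]$ given in Definition \ref{def: angles}. This reduces to a local computation on $E_p$: parametrize the boundary traversal of $\Delta$ in $\RR^2$ near $p$ in the clockwise sense, lift it to the blow-up, and read off the direction in which the resulting arc sweeps across $\RR\PP^1 \cap E_p^\circ$. One has to check that the convention places $\Delta$ to the right of its boundary and that the adjacent slopes $s(l_i) < s(l_{i+1})$ correspond to traversal in the positive direction in $E_p$; I expect this to be the subtlest, though still routine, point. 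Once the orientations are pinned down, assembling the contributions gives the residue equal to $\sum_{p\in V(\Delta)} \lambda_p(\Delta) f_p(\alpha_p(\Delta)) = f(\alpha(\Delta))$, and $\alpha(\Delta)\in K$ then follows immediately since, by Lemma \ref{lem: H1 is given by exceptional divisors}, $K$ contains the $f$-preimage of the image of the residue map.
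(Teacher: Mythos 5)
Your proposal is correct and follows essentially the same route as the paper: identify the closure of $\Delta$ in $\widetilde{M}$ as $\Delta\cup\bigcup_{p\in V(\Delta)}\alpha_{p}(\Delta)$, realize the residue as the Borel--Moore boundary of the lifted chain $\overline{\Delta}\otimes e(\overline{\Delta})$, and read off the coefficient $\lambda_{p}(\Delta)$ from the comparison $e(\overline{\Delta})=\lambda_{p}(\Delta)e_{p}$ on each angle. The orientation compatibility you flag as the technical point is exactly what the paper asserts (and does not elaborate beyond stating that the clockwise orientation of $\Delta$ matches the prescribed orientations of the $\alpha_{p}(\Delta)$), so your write-up is, if anything, slightly more careful on that point.
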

	
\begin{proof}

		Since $ \Delta $ is bounded, the closure of $ \Delta $ in $ \widetilde{M} $ is exactly
		\begin{equation*}
			\overline{\Delta} = \Delta \cup (\bigcup\limits_{p \in V(\Delta)} \alpha_{p}(\Delta)),
		\end{equation*}
		 where the orientation of $ \Delta $ matches those of $ \alpha_{p}(\Delta) $ on the boundary. 
		 
		 By definition, the image of $ [\Delta \otimes e] $ under the residue map can be represented by the boundary of $ \overline{\Delta}\otimes e(\overline{\Delta}) $, which equals to
	\begin{equation*}
		[\partial(\overline{\Delta}\otimes e(\overline{\Delta}))] = \sum\limits_{p \in V(\Delta)} [\alpha_{p}(\Delta) \otimes 	e(\overline{\Delta})] = \sum\limits_{p \in V(\Delta)} \lambda_{p}(\Delta)[\alpha_{p}(\Delta) \otimes e_{p}] = f(\alpha(\Delta)).
	\end{equation*}
\end{proof}

	In conclusion, we prove the following proposition.
	\begin{prop}\label{prop: K is given by bounded chambers and resonant points}
		The kernel $ K $ of the map $ F $ in (\ref{eqn: definition of F}) is spanned by $ \alpha(\Delta)(\Delta \in C) $ and $ \alpha(p)^{\pm}(p \in R) $.
	\end{prop}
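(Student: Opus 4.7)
The plan is to exploit the fact that $F$ factors as a composition of two surjections whose kernels have been pinned down by the preceding lemmas. Specifically, $F = g \circ f$, where
\begin{equation*}
f \colon A \twoheadrightarrow \bigoplus_{p \in R} H^{BM}_{1}(E_{p}^{\circ},\LL), \qquad g \colon \bigoplus_{p \in R} H^{BM}_{1}(E_{p}^{\circ},\LL) \twoheadrightarrow H_{1}(M(\calA),\LL),
\end{equation*}
with $g$ the surjection of Lemma \ref{lem: H1 is given by exceptional divisors}. By a standard kernel-of-composition argument, $K = f^{-1}(\ker g)$; and since $f$ is surjective, $K$ is spanned by $\ker(f)$ together with any choice of $f$-preimages of a spanning set for $\ker(g)$. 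So the job reduces to describing these two pieces.

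For the first piece, $f$ decomposes as $\bigoplus_{p \in R} f_{p}$, and Lemma \ref{lem: generators for CP1} tells us that each $\ker(f_{p})$ is spanned by $\alpha(p)^{+}$ and $\alpha(p)^{-}$. Hence $\ker(f)$ is spanned by the collection $\{\alpha(p)^{\pm} : p \in R\}$.

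For the second piece, Lemma \ref{lem: H1 is given by exceptional divisors} identifies $\ker(g)$ with the image of the residue map $H^{BM}_{2}(M(\calA),\LL) \to \bigoplus_{p \in R} H^{BM}_{1}(E_{p}^{\circ},\LL)$. Lemma \ref{lem: generator of H_2} provides a spanning set $\{[\Delta \otimes e] : \Delta \in C\}$ for the source, and Lemma \ref{lem: residue of each bounded chamber} computes the image of each $[\Delta \otimes e]$ under the residue map as $f(\alpha(\Delta))$. Therefore $\ker(g)$ is spanned by $\{f(\alpha(\Delta)) : \Delta \in C\}$, and the natural choice of $f$-preimages is $\{\alpha(\Delta) : \Delta \in C\}$.

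Assembling the two pieces yields the proposition. There is no substantive obstacle — all the analytic and geometric content (Borel--Moore duality, vanishing around components with nontrivial monodromy, explicit residue computations) has already been absorbed into the cited lemmas — and the argument is a short diagram chase. The one point worth recording explicitly is that each $\alpha(\Delta)$ does lie in $K$: indeed $F(\alpha(\Delta)) = g(f(\alpha(\Delta)))$, and $f(\alpha(\Delta))$ is the image of $[\Delta \otimes e]$ under the residue map, which is annihilated by $g$ by the exact sequence in the proof of Lemma \ref{lem: H1 is given by exceptional divisors}.
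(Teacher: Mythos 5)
Your argument is correct and is precisely the one the paper intends: the paper states the proposition as an immediate consequence of Lemmas \ref{lem: H1 is given by exceptional divisors}, \ref{lem: generators for CP1}, \ref{lem: generator of H_2} and \ref{lem: residue of each bounded chamber}, and your kernel-of-composition chase ($K = f^{-1}(\ker g)$, so $K$ is spanned by $\ker f$ together with preimages of a spanning set of $\ker g$) is exactly the omitted diagram chase. No gaps; if anything, you supply more detail than the paper does.
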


    \subsection{Explicit computation}\label{subsection: computation for global marking}

 The following lemma explicitly computes $ \lambda_{p}(\Delta) $.

\begin{lem}\label{lem: difference between extensions}
	There exist $ x_{0},y_{0} \in \RR $ such that $ x_{0} \neq 0 $ and $ (x(p)+x_{0},y(p)+y_{0}) \in \Delta $. Furthermore, 
	\begin{equation*}
		\lambda_{p}(\Delta) = \begin{cases}
					\prod\limits_{\substack{l \in \calA_{p} \\s(l)x_{0} > y_{0}}}m(l) & x_{0} < 0 \\
					1 & x_{0} > 0
		\end{cases}
	\end{equation*}
\end{lem}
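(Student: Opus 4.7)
The plan is to compute $\lambda_p(\Delta)$ as the monodromy of $\LL$ along the loop formed by two natural continuations of a flat section. Existence of $(x_0,y_0)$ with $x_0\neq 0$ and $(x(p)+x_0,y(p)+y_0)\in\Delta$ is immediate: by the convention of subsection \ref{subsection: setting} every line in $\calA_p$ has finite slope, so the sector of $\Delta$ at $p$ is bounded by non-vertical lines and contains interior points with nonzero horizontal component.

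For the monodromy computation, translate $p$ to the origin with coordinates $(u,v)=(x-x(p),y-y(p))$, and work in the blow-up chart $(c,d)=(u/v,v)$, in which $E_p=\{d=0\}$ and $q=(x_0/y_0,0)$. For each $l\in\calA_p$ the pulled-back equation is $Q_l=d(1-s(l)c)$. Pick $\alpha_l\in\CC$ with $e^{2\pi i\alpha_l}=m(l)$; since $p$ is resonant, $N=\sum_{l\in\calA_p}\alpha_l\in\ZZ$, and
\[
\prod_{l\in\calA_p}Q_l^{\alpha_l} \;=\; d^{N}\,g(c),\qquad g(c)=\prod_{l\in\calA_p}(1-s(l)c)^{\alpha_l},
\]
with $d^N$ single-valued. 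Thus sections of $\LL$ near $q$ in $\widetilde{M}$ are identified with branches of $g(c)$, up to nowhere-vanishing contributions from $l\notin\calA_p$ that will coincide along the two paths compared below.

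I then compare the branches of $g$ at $c=x_0/y_0$ defined by $e(\overline{\Delta})$ and $e_p$. The first continues $e$ from $\overline{\HH^+}$ through $\Delta$ to $q_0=(x_0,y_0)$ and then along the real ray $t\mapsto(tx_0,ty_0)$ to $q$ (Path 1); the second continues $e$ along $\gamma_p$ to $[0:1]$ and then through $\overline{\HH_p^+}$ to $q$ (Path 2). The key geometric observation is that on $\overline{\HH^+}$ one has $\Im(c)=-u\,\Im(v)/|v|^2$, so Path 1 may be chosen to lie in $\{\Im(c)\leq 0\}$ when $x_0>0$ and in $\{\Im(c)\geq 0\}$ when $x_0<0$, while Path 2 always lies in $\overline{\HH_p^+}=\{\Im(c)\leq 0\}$ within the chart. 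For $x_0>0$ both paths lie in the same closed hemisphere of the $c$-line, which remains simply connected after removing the real punctures $c=1/s(l)$ on its boundary; thus the two branches of $g$ agree and $\lambda_p(\Delta)=1$.

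For $x_0<0$ the loop Path 1 followed by the reverse of Path 2 crosses the real $c$-axis and encircles the punctures lying between $0$ and $x_0/y_0$. I will track $\arg(1-s(l)c)$ along both paths using the identity $\arg(1-s(l)c)=\arg(Q_l)-\arg(d)$ and the branch $\arg\in[0,\pi]$ on $\overline{\HH^+}$: for Path 1 the value at $q$ is determined by the signs of $y_0-s(l)x_0$ and $y_0$, and for Path 2 it is determined by which punctures the real arc in $\{\Im(c)\leq 0\}$ passes below. Taking the difference and exponentiating by $\alpha_l$, one obtains a monodromy factor of the form $\prod m(l)^{\pm 1}$ over a subset of $\calA_p$, and invoking the resonance $\prod_{l\in\calA_p}m(l)=1$ rewrites it as the stated $\prod_{s(l)x_0>y_0}m(l)$. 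The main obstacle is that the set of lines naturally produced by the direct monodromy depends on the sign of $y_0$, namely it is the set whose punctures are actually crossed in chart 2; the resonance must be applied at precisely the right moment to convert this sign-dependent set into the uniform form appearing in the lemma.
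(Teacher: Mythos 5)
Your overall strategy coincides with the paper's: both reduce $\lambda_{p}(\Delta)$ to the monodromy of $\LL$ along the loop obtained by concatenating the two continuations of $e$, and both evaluate that monodromy by pushing the loop into a neighbourhood of $E_{p}^{\circ}$. The paper does this with an explicit two-parameter homotopy and then reads off winding numbers around the punctures $[1:s(l)]$; you instead realize flat sections as branches of the master function $\prod_{l}Q_{l}^{\alpha_{l}}$, factor out the single-valued $d^{N}$ in the blow-up chart, and track branches of $g(c)=\prod(1-s(l)c)^{\alpha_{l}}$. Your treatment of the case $x_{0}>0$ is correct and complete: both paths stay in the closed half-plane $\{\operatorname{Im}(c)\leq 0\}$, which remains simply connected after deleting the real punctures, so the two branches agree. (The assertion that the factors $Q_{l}^{\alpha_{l}}$ for $l\notin\calA_{p}$ contribute nothing still deserves a line: it holds because $\operatorname{Im} Q_{l}\geq 0$ on $\overline{\HH^{+}}$ and $Q_{l}$ tends to the nonzero real constant $Q_{l}(p)$ along $E_{p}$, so $\arg Q_{l}$ admits a continuous determination along both paths.)

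The gap is in the case $x_{0}<0$, which is the only case where the formula is nontrivial. There you describe a plan (``one obtains a monodromy factor of the form $\prod m(l)^{\pm 1}$ over a subset of $\calA_{p}$'') but never identify the subset or the sign, and you explicitly leave unresolved how resonance converts the answer into the stated product. To close it: the concatenated loop is a simple closed curve whose interior meets the real $c$-axis exactly in the open interval between $0$ and $x_{0}/y_{0}$, so its winding number is $\pm 1$ around the punctures $1/s(l)$ in that interval and $0$ around all others (including $c=\infty$). When $y_{0}>0$ the interval is $(x_{0}/y_{0},0)$, the orientation is counterclockwise, and the enclosed punctures are exactly those with $s(l)<y_{0}/x_{0}$, i.e. $s(l)x_{0}>y_{0}$; no resonance is needed. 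When $y_{0}<0$ the interval is $(0,x_{0}/y_{0})$, the orientation is clockwise, and one obtains $\prod_{s(l)>y_{0}/x_{0}}m(l)^{-1}$, which equals $\prod_{s(l)x_{0}>y_{0}}m(l)$ precisely because $\prod_{l\in\calA_{p}}m(l)=1$ (no $l$ satisfies $s(l)=y_{0}/x_{0}$, since such an $l$ would pass through the chosen point of $\Delta$). Without this identification the proposal does not establish the formula; with it, your route and the paper's amount to the same winding-number computation in different clothing.
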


\begin{proof}

The existence of $ x_{0}, y_{0} $ follows from the fact that $ \Delta $ is a nonempty open set in $ \RR^{2} $. Since $ \Delta $ is convex, by our choice of $ (x_{0},y_{0}) $, we have 
	\begin{equation*}
		\{(x(p)+stx_{0},y(p)+sty_{0}+s(1-t)i) \mid s \in (0,1],\ t \in [0,1] \} \subset  M(\calA).
	\end{equation*}

So we have the following homotopy in $ \widetilde{M} $:
\begin{equation*}
		\Phi \colon [0,1] \times [0,1] \rightarrow \widetilde{M},\ \Phi(s,t) = \begin{cases}
		    (x(p)+stx_{0},y(p)+sty_{0}+s(1-t)i) & s > 0 \\
            [tx_{0}:ty_{0}+(1-t)i] \in E_{p}^{\circ} & s = 0
		\end{cases}
	\end{equation*}

Denote by $ \gamma_{1}(t) = \Phi(t,0) $, $ \gamma_{2}(t) = \Phi(1,t) $, $ \gamma_{3}(t) = \Phi(1-t,1) $, and $ \sigma_{1}(t) = \Phi(0,t) $. Define
	\begin{equation*}
		\sigma_{2} \colon [0,1] \rightarrow E_{p}^{\circ},\ \sigma_{2}(t) = [(1-t):(1-t) \frac{y_{0}}{x_{0}}+ti].
	\end{equation*}

Recall that $ \gamma_{p} $ is exactly the segment $ \gamma_{1}^{-1} $ and the value of $ e(\overline{\Delta}) $ at the point $ [x_{0}:y_{0}] \in \alpha_{p}(\Delta) $ is obtained by extending $ e|_{\Delta} $ along the segment $ \gamma_{3} $. Thus $ \lambda_{p}(\Delta) $ is exactly the monodromy along the loop
	\begin{equation*}
		\begin{tikzcd}
				\left[x_{0}:y_{0}\right] \arrow{r}{\sigma_{0}} & \left[0:1\right] \arrow{r}{\gamma_{1}} & (x(p),y(p)+i) \arrow{r}{\gamma_{2}} & (x(p)+x_{0},y(p)+y_{0}) \arrow{r}{\gamma_{3}} & \left[x_{0}:y_{0}\right].
		\end{tikzcd}
	\end{equation*}

	Under the above homotopy $ \Phi $, the above loop is homotopic to the loop
	\begin{equation*}
		\begin{tikzcd}
			\left[x_{0}:y_{0}\right] \arrow{r}{\sigma_{2}} & \left[0:1\right] \arrow{r}{\sigma_{1}} & \left[x_{0}:y_{0}\right],
		\end{tikzcd}
	\end{equation*}
which is shown in Figure \ref{fig:3}.

 \begin{figure}[H]
 	\centering
 	\includegraphics[width=0.6\linewidth]{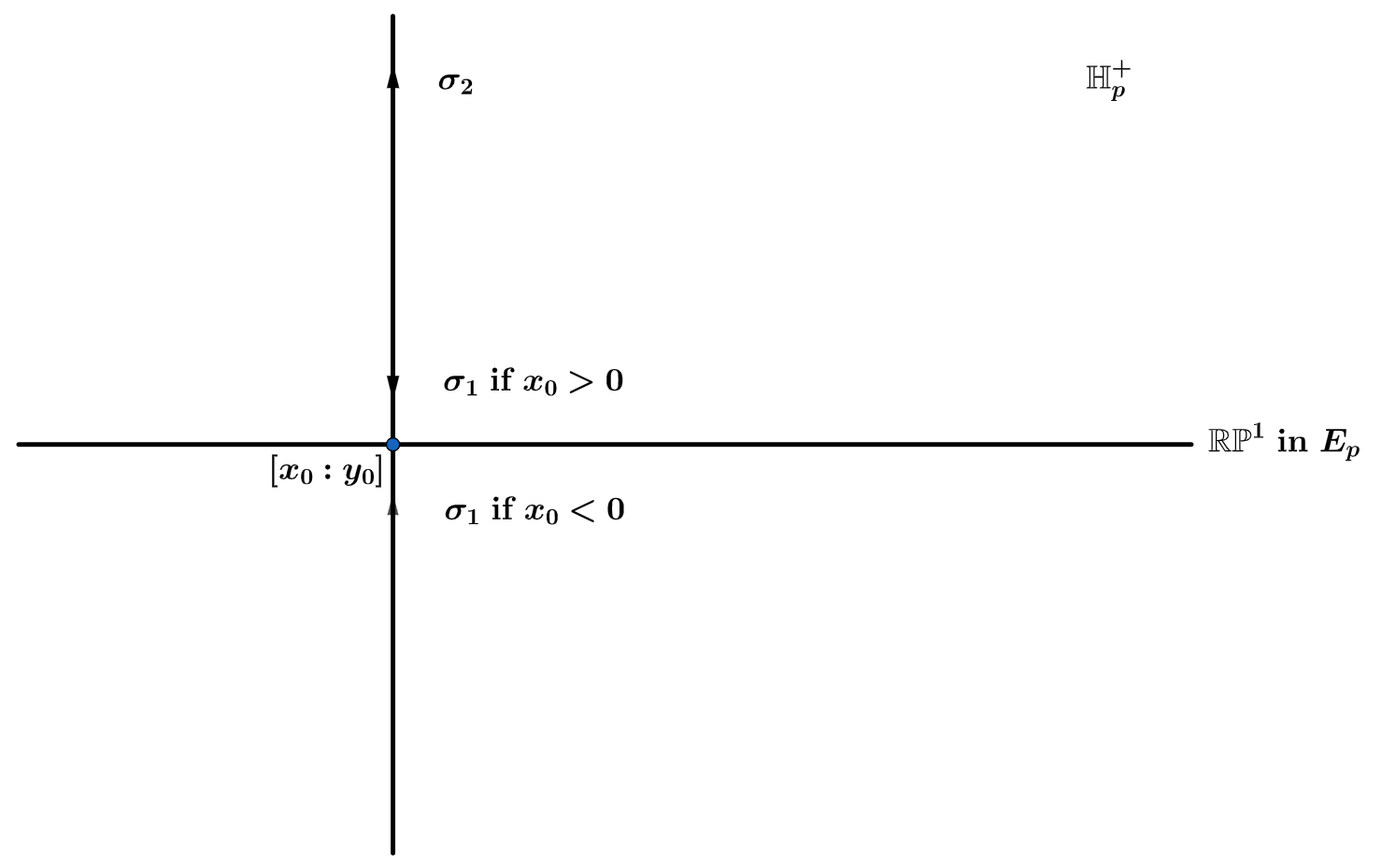}
 	\caption[1]{The loop through $ \left[x_{0}:y_{0}\right] $}
 	\label{fig:3}
 \end{figure}

When $ x_{0} > 0 $, this loop is contractible. So $ \lambda_{p}(\Delta) = 1 $.

When $ x_{0} < 0 $, This loop winds counterclockwise once around all points $ [1:s(l)] $ lying to the left of $ [x_{0}:y_{0}] $, which yields the equality
\begin{equation*}
   \lambda_{p}(\Delta) =  \prod\limits_{\substack{l \in \calA_{p} \\s(l)x_{0} > y_{0}}}m(l).
\end{equation*}
\end{proof}

\begin{rmk}\label{rmk: compare lambda with alpha}
     The value of $ \lambda_{p}(\Delta) $ is independent of the choice of $ (x_{0},y_{0}) $. More explicitly, suppose that $ \calA_{p} = \{ l_{1},\cdots,l_{k} \}$, where $ s(l_{1}) < \cdots < s(l_{k}) $. Then there are $ 2k $ chambers with vertex $ p $. Their corresponding $ \lambda_{p} $ are shown in Figure \ref{fig:4}

     \begin{figure}[H]
 	\centering
 	\includegraphics[width=0.5\linewidth]{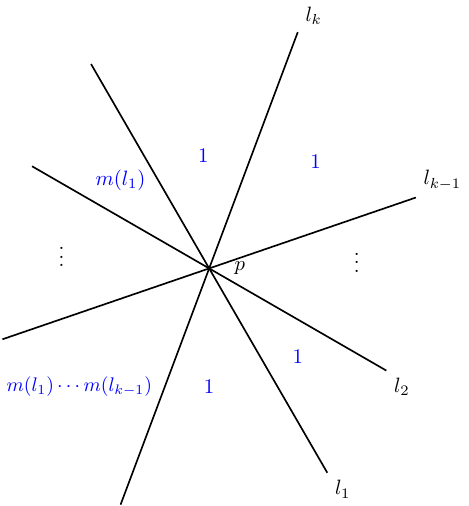}
 	\caption[1]{Coefficients $\lambda_{p}$}
 	\label{fig:4}
 \end{figure}

    In particular, comparing $ \lambda_{p}(\Delta) $ with the coefficients of $ \alpha(p)^{\pm} $ yields
    \begin{equation*}
        \sum\limits_{\substack{V(\Delta) \ni p \\ Q_{l_{1}}\mid_{\Delta} > 0}}\lambda_{p}(\Delta)\alpha_{p}(\Delta) = \alpha(p)^{+},\ \sum\limits_{\substack{V(\Delta) \ni p \\ Q_{l_{1}}\mid_{\Delta} < 0}}\lambda_{p}(\Delta)\alpha_{p}(\Delta) = \alpha(p)^{-}.
    \end{equation*}

     Similar equalities hold for $ l_{k} $.
\end{rmk}

		\begin{ex}\label{example: A3 arrangement}
		Let $ \calA $ be a reflection arrangement of type $ A_{3} $ as shown in Figure \ref{fig:5}. Let $ \LL $ be the complex rank-one local system with constant monodromy map $ \omega = e^{\frac{2\pi i}{3}} $. 

  \begin{figure}[H]
 	\centering
 	\includegraphics[width=0.7\linewidth]{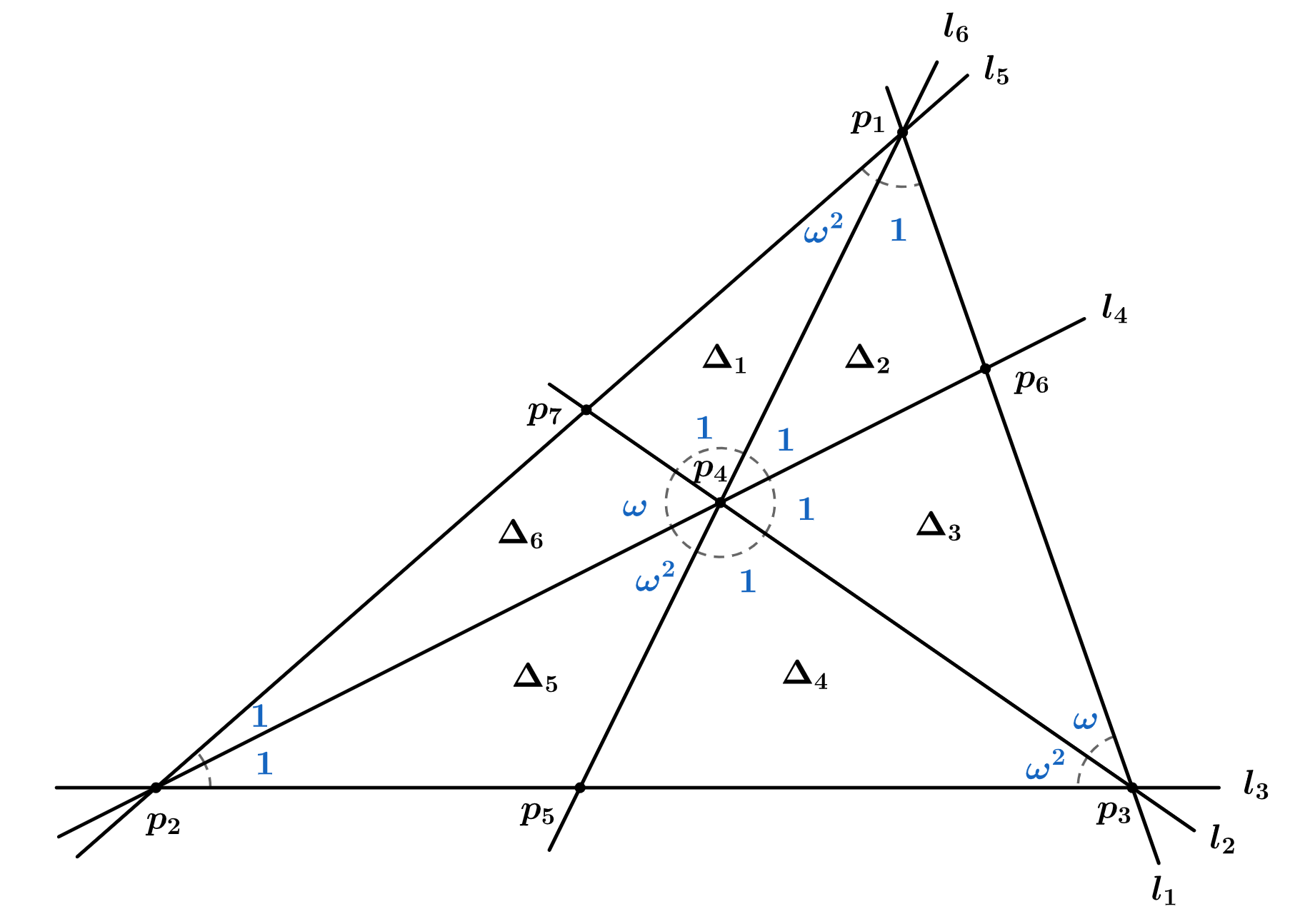}
 	\caption[1]{$ A_{3} $ arrangement}
 	\label{fig:5}
 \end{figure}
 
 	Then $ R = \{p_{1},p_{2},p_{3},p_{4}\} $ and $ A $ is a $ \CC $-vector space of dimension $ 12 $ spanned by angles
		\begin{equation*}
			(l_{1},l_{5}),\ 	(l_{5},l_{6}),\ 	(l_{6},l_{1}),\ 	(l_{3},l_{4}),\ 	(l_{4},l_{5}),\ 	(l_{5},l_{3}),\ 	(l_{1},l_{2}),\ 	(l_{2},l_{3}),\ 	(l_{3},l_{1}),\ 	(l_{2},l_{4}),\ 	(l_{4},l_{6}),\ 	(l_{6},l_{2}). 
		\end{equation*}
		
	Each resonant point gives two generators in $ K $. For example, at $ p_{1} $ we have
	\begin{equation*}
		\alpha(p_{1})^{+} = (l_{1},l_{5}) + (l_{5},l_{6}) + (l_{6},l_{1})
	\end{equation*}
	and
	\begin{equation*}
		\alpha(p_{1})^{-} = \omega(l_{1},l_{5}) + \omega^{2}(l_{5},l_{6}) + (l_{6},l_{1}).
	\end{equation*}
	
	Each bounded chamber gives one generator in $ K $. For example,
	for $ \Delta_{1} $ we have
	\begin{equation*}
		\alpha(\Delta_{1}) = \omega^{2}(l_{5},l_{6}) + (l_{6},l_{2}).
	\end{equation*}

	\setcounter{MaxMatrixCols}{14}
	So we have a $ 14 \times 12 $ matrix of rank $ 11 $:
	\begin{equation*}
	\begin{bmatrix}
	1	& 1 & 1 &  &  &  &  &  &  &  &  &  \\
	\omega	& \omega^{2} & 1 &  &  &  &  &  &  &  &  &  \\
		&  &  & 1 & 1 & 1 &  &  &  &  &  &  \\
		&  &  & \omega & \omega^{2} & 1 &  &  &  &  &  &  \\
		&  &  &  &  &  & 1 & 1 & 1 &  &  &  \\
		&  &  &  &  &  & \omega & \omega^{2} & 1 &  &  &  \\
		&  &  &  &  &  &  &  &  & 1 & 1 & 1 \\
		&  &  &  &  &  &  &  &  & \omega & \omega^{2} & 1 \\
		& \omega^{2} &  &  &  &  &  &  &  &  &  & 1 \\
		&  & 1 &  &  &  &  &  &  &  & 1 &  \\
		&  &  &  &  &  & \omega &  &  & 1 &  &  \\
		&  &  &  &  &  &  & \omega^{2} &  &  &  & 1 \\ 
		&  &  & 1 &  &  &  &  &  &  & \omega^{2} &  \\ 
		&  &  &  & 1 &  &  &  &  & \omega &  &  
	\end{bmatrix}.
	\end{equation*}
	
	Therefore, by Proposition \ref{prop: K is given by bounded chambers and resonant points} we have $ \dim_{\CC} K = 11 $. So $ h_{1}( M(\calA),\LL) = \dim_{\CC} A - \dim_{\CC} K = 1 $, which is compatible with classical results. Note that this gives a nontrivial example where the upper bound in our Theorem \ref{thm: main} is sharp.
	\end{ex}

	\section{Proof of Theorem \ref{thm: main}}\label{section: proof of main theorem}
	In this section, we prove Theorem \ref{thm: main}. Recall that $ \calA $ is a complexified real line arrangement with $ \# L_{2}(\calA) > 1 $, $ l_{0} $ is a given line in $ \calA $ and $ R_{0} $ is the set of resonant points on $ l_{0} $.
	
	Take a generic real line $ l_{\infty} $ in $ \CC\PP^{2} $ such that $ (l_{0},l_{\infty}) $ forms a sharp pair in $ \calA \cup \{l_{\infty}\} $. In other words, one of the two components of $ \RR\PP^{2}\setminus(l_{0}\cup l_{\infty}) $ contains no intersection points in $ L_{2}(\calA) $. Then, by taking $ l_{\infty} $ as the line at infinity and applying a further real coordinate transformation, we may assume that
	\begin{enumerate}
		\item 	$ l_{0} = \{y=0\} $,
		\item   The slopes $ s(l) $ of the lines $ l $ in $ \calA $ are distinct non-negative numbers, and 
		\item 	for any $ p \in L_{2}(\calA) $, its $ y $-coordinate is non-negative.
	\end{enumerate}

	  For each intersection point $ p $ on $ l_{0} $, denote by $ \calA'_{p} = \calA_{p} \setminus \{l_{0}\} $. If $ p $ further belongs to $ R_{0} $, we associate each line in $ \calA'_{p} $ with an element in $ A(p) $ as follows. Suppose that $ \calA'(p) = \{l_{1},\cdots,l_{k}\} $, where $ 0  < s(l_{1}) < \cdots <s(l_{k}) $. As shown in Figure \ref{fig:6}, define
	\begin{equation*}
		\alpha(l_{i}) = \sum\limits_{j=1}^{i}(l_{j-1},l_{j}) \in A(p).
	\end{equation*}
	
 \begin{figure}[H]
 	\centering
 	\includegraphics[width=0.5\linewidth]{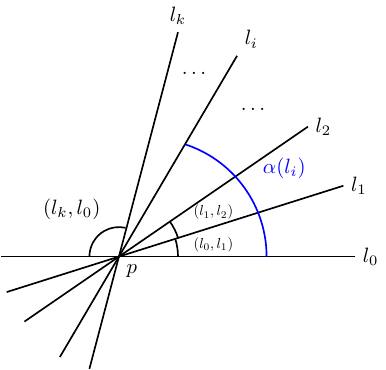}
 	\caption[1]{Definition of $ \alpha(l) $}
 	\label{fig:6}
 \end{figure}
    
	We further set
	\begin{equation*}
		\calA'  = \bigcup\limits_{p \in R_{0}} \calA'_{p}
	\end{equation*}
	 and
	\begin{equation*}
		A' = \bigoplus\limits_{l \in \calA'} \mathbb{K} \cdot \alpha(l) \subset A.
	\end{equation*}
	For convenience, we set $ \alpha(l) = 0 \in A $ if $ l \notin \calA' $.
	
	The following lemma reduces $ K $ to $  K \cap A' $.
	\begin{lem}\label{lem: reduction to a line}
		$ K+A' = A $.
	\end{lem}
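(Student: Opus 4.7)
The plan is to show $A \subseteq K + A'$ by verifying that every angle generator, at every resonant point, lies in $K + A'$. I would split the argument into two cases according as $p \in R_0$ or $p \in R \setminus R_0$.

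For $p \in R_0$, write $\calA_p = \{l_{p,0} = l_0, l_{p,1}, \ldots, l_{p,k}\}$ in order of increasing slope. I would observe that the non-wrap-around angles at $p$ are telescoping differences
\begin{equation*}
(l_{p,i-1}, l_{p,i}) = \alpha(l_{p,i}) - \alpha(l_{p,i-1}) \in A',
\end{equation*}
with the convention $\alpha(l_{p,0}) = \alpha(l_0) = 0$, while the wrap-around angle satisfies
\begin{equation*}
(l_{p,k}, l_0) = \alpha(p)^+ - \alpha(l_{p,k}) \in K + A'.
\end{equation*}
This handles the $R_0$ case directly.

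For $p \in R \setminus R_0$, the main step is an induction on $y_p$ from smallest to largest. Write $\calA_p = \{l_{p,1}, \ldots, l_{p,k}\}$ sorted by (positive) slope. For each non-wrap-around angle $(l_{p,i}, l_{p,i+1})$ with $1 \le i \le k-1$, I would take $\Delta$ to be the chamber of $\calA$ containing the ``lower-left'' sector at $p$ bounded locally by $l_{p,i}$ and $l_{p,i+1}$. The key geometric observation is that this sector is contained in the triangle with vertices $p$, $l_{p,i} \cap l_0$, and $l_{p,i+1} \cap l_0$, so $\Delta$ is bounded; moreover, every vertex of $\Delta$ other than $p$ has strictly smaller $y$-coordinate, since $l_0$ is the only horizontal line in $\calA$. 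Every resonant vertex $q \in V(\Delta) \setminus \{p\}$ is then either in $R_0$ (already handled) or in $R \setminus R_0$ with $y_q < y_p$ (handled by the inductive hypothesis), so all the $\alpha_q(\Delta)$ for $q \neq p$ lie in $K + A'$. Combined with $\alpha(\Delta) \in K$ from Lemma~\ref{lem: residue of each bounded chamber} and the nonvanishing of $\lambda_p(\Delta)$, this yields $(l_{p,i}, l_{p,i+1}) \in K + A'$. Finally, the wrap-around angle $(l_{p,k}, l_{p,1})$ is obtained from $\alpha(p)^+ = \sum_{i=1}^{k} (l_{p,i}, l_{p,i+1}) \in K$ (with the convention $l_{p,k+1} = l_{p,1}$), now that the other $k-1$ summands are already in $K + A'$.

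The hard part will be the geometric verification that the ``lower-left'' chamber $\Delta$ is always bounded with all other vertices at strictly smaller $y$-coordinate. Boundedness should follow from the setup: since all slopes in $\calA$ are positive and $l_0 = \{y = 0\}$ sits at the bottom of the arrangement, the sector is trapped in the triangle described above. Strictness of the $y$-inequality will follow from the fact that $l_0$ is the only horizontal line of $\calA$. It is essential that the wrap-around angle $(l_{p,k}, l_{p,1})$, whose ``downward'' sector may well fail to lie in a bounded chamber, is handled only indirectly through the relation $\alpha(p)^+ \in K$.
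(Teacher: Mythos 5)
Your proof is correct and takes essentially the same route as the paper's: the $R_0$ case via the spanning set $\{\alpha(p)^{+}\}\cup\{\alpha(l)\mid l\in\calA'_p\}$ of $A(p)$, and the $R\setminus R_0$ case by induction on the $y$-coordinate (the paper phrases this as a minimal counterexample), using for each non-wrap-around angle the same bounded chamber trapped in the triangle over $l_0$ with all other vertices strictly lower, together with $\alpha(\Delta)\in K$, and recovering the wrap-around angle from $\alpha(p)^{+}\in K$.
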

	
	\begin{proof}
		For any $ p \in R_{0} $, recall that $ \alpha(p)^{+} = \sum\limits_{j=1}^{k}(l_{j-1},l_{j}) \in K $.  Since $ A(p) $ is spanned by $ \alpha(p)^{+} $ and $ \alpha(l)(l \in \calA'_{p}) $, we have $ A(p) \subset K+A' $. 
		
		Now it suffices to prove that for any $ q \in R\setminus R_{0} $, $ A(q) \subset K+A' $. We prove it by contradiction. Otherwise, we take $ q $ to be a point that minimizes $ y(q) $ among all resonant points such that $ A(q) \nsubseteq K+A' $.   Suppose that $ \calA_{q} = \{l_{1},\cdots,l_{k}\} $, where $ 0 < s(l_{1}) < \cdots <s(l_{k}) $. For each $ 1 \leq i \leq k-1 $, there exists a unique bounded chamber $ \Delta_{i} $ satisfying that $ \alpha_{q}(\Delta_{i}) = (l_{i},l_{i+1}) $ and $ 0 \leq y(p) < y(q) $ holds for all $ p $ in $ V(\Delta_{i}) $ other than $ q $. By minimality of $ y(q) $, we have 
		\begin{equation*}
			(l_{i},l_{i+1}) = \frac{1}{\lambda_{q}(\Delta_{i})}\left(\alpha(\Delta)-\sum\limits_{p \in V(\Delta_{i})\setminus \{q\}}\lambda_{p}(\Delta_{i})\alpha_{p}(\Delta_{i})\right) \in K+A'.
		\end{equation*}

		Furthermore, we have
		\begin{equation*}
			(l_{k},l_{1}) = \alpha(q)^{+} - \sum\limits_{i=1}^{k-1}(l_{i},l_{i+1}) \in K+A'
		\end{equation*}
		
		So $ A(q) \subset K+A' $, which contradicts the definition of $ q $.
	\end{proof}
	
	For each line $ l \in \calA \setminus \{l_{0}\} $, since $ \# L_{2}(\calA) > 1 $, there exists a unique intersection point $ q(l) $ on $ l $ but not on $ l_{0} $ with the minimum $ y $-coordinate. We call such a point $ q(l) $ a \textbf{neighbor} of $ l_{0} $. Note that different lines $ l $ may yield the same neighbor $ q(l) $. See Figure \ref{fig:7} for an example. The following lemma describes this phenomenon.
	\begin{lem}\label{lem: case of repeated connectable points}
		Let $ p_{1},p_{2} $ be two intersection points on $ l_{0} $ satisfying that $ x(p_{1}) < x(p_{2}) $. If there exist two lines $ l_{i} \in \calA'_{p_{i}}(i=1,2) $ such that $ q(l_{1}) = q(l_{2}) $, then
		\begin{enumerate}
			\item $ l_{1} $ has the minimum slope among the lines in $ \calA'_{p_{1}} $, 
			\item $ l_{2} $ has the maximum slope among the lines in $ \calA'_{p_{2}} $, and
			\item All intersection points between $ p_{1} $ and $ p_{2} $ are double points.
		\end{enumerate}
		In particular, for any neighbor $ q $ of $ l_{0} $, there are at most two lines $ l $ in $ \calA' $ such that $ q = q(l) $. 
	\end{lem}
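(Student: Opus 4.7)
The plan is to prove all three conclusions by direct coordinate computation in the real affine chart fixed in Subsection \ref{subsection: setting}. Write $a_i = x(p_i)$ and $s_i = s(l_i)$ for $i=1,2$, and set $q := q(l_1)=q(l_2)$. Since $q \notin l_0$ and the standing normalization forces all intersections to have $y \geq 0$, we have $y(q)>0$. Solving $s_1(x-a_1)=s_2(x-a_2)$ then yields
\begin{equation*}
	x(q)=\frac{s_2 a_2 - s_1 a_1}{s_2-s_1},\qquad y(q)=\frac{s_1 s_2(a_2-a_1)}{s_2-s_1},
\end{equation*}
together with the sign condition $s_1<s_2$ (a necessary consequence of $y(q)>0$). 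In particular $x(q)>a_2$, so $p_1,p_2,q$ span a non-degenerate triangle $T$ in the closed upper half plane whose three edges lie on $l_0$, $l_1$, and $l_2$.

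For (1), I argue by contradiction. If some $l_1'\in\calA'_{p_1}$ had $s':=s(l_1')<s_1$, the analogous computation gives $y(l_1'\cap l_2)=s' s_2(a_2-a_1)/(s_2-s')$. Since $s \mapsto s/(s_2-s)$ is strictly increasing on $(0,s_2)$, this quantity is positive and strictly smaller than $y(q)$. Then $l_1'\cap l_2$ is an intersection point of $\calA$ lying on $l_2$, off $l_0$, with strictly smaller $y$-coordinate than $q(l_2)=q$, contradicting the minimality defining the neighbor. Part (2) is symmetric: for $l_2'\in\calA'_{p_2}$ with $s'':=s(l_2')>s_2$, the $y$-coordinate of $l_1\cap l_2'$ equals $s_1s''(a_2-a_1)/(s''-s_1)$, and monotonicity of $s\mapsto s/(s-s_1)$ on $(s_1,\infty)$ puts this strictly below $y(q)$, contradicting $q(l_1)=q$.

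For (3), let $p=(c,0)$ be an intersection point on $l_0$ with $a_1<c<a_2$ and assume $\mult(p)\geq 3$, so that $|\calA'_p|\geq 2$. Since $p\neq q$ and two points determine a unique line, at most one member of $\calA'_p$ can pass through $q$; I pick $l\in\calA'_p$ with $l\not\ni q$ and set $s:=s(l)>0$. I then split into three cases. If $s\in(s_1,s_2)$, then $l$ enters the triangle $T$ at $p$ with positive slope, and since it does not reach $q$, it must exit through the relative interior of the edge on $l_1$ or on $l_2$, producing an intersection point of $\calA$ above $l_0$ with $y$-coordinate strictly in $(0,y(q))$. If $s<s_1$, a direct computation parallel to the proof of (1) gives $y(l\cap l_2)=s_2 s(a_2-c)/(s_2-s)$, which is positive; monotonicity together with the comparison $a_2-c<a_2-a_1$ shows it is strictly below $y(q)$. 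If $s>s_2$, the symmetric computation gives $y(l\cap l_1)<y(q)$. Each case exhibits an intersection point of $\calA$ on $l_1$ or $l_2$, above $l_0$, with $y$-coordinate less than $y(q)$, contradicting $q(l_1)=q$ or $q(l_2)=q$.

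The main technical obstacle is the bookkeeping in part (3), especially confirming that in the regime $s\in(s_1,s_2)$ the exit point really lies in the relative interior of an edge of $T$. This will follow from continuity together with the boundary estimates $s_1 s(c-a_1)/(s-s_1)\big|_{s=s_2}=y(q)\cdot(c-a_1)/(a_2-a_1)<y(q)$ and $s_2 s(a_2-c)/(s_2-s)\big|_{s=s_1}=y(q)\cdot(a_2-c)/(a_2-a_1)<y(q)$, which rule out the exit point's coinciding with $p_1$, $p_2$, or $q$. The boundary slopes $s=s_1$ and $s=s_2$ never arise because slopes in $\calA$ are pairwise distinct and $l\neq l_1,l_2$ (as $p\neq p_1,p_2$ and each line of $\calA$ distinct from $l_0$ meets $l_0$ at a single point).
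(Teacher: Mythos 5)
Your proof is correct and follows essentially the same route as the paper's: in both, each offending configuration is ruled out because it would force an intersection point of $\calA$ on $l_{1}$ or $l_{2}$ with positive $y$-coordinate strictly below $y(q)$, contradicting the minimality defining the neighbor $q$. You execute this with explicit coordinates and monotonicity estimates where the paper argues synthetically with the triangle $p_{1}p_{2}q$, but the underlying idea is identical.
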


     \begin{figure}[H]
 	\centering
 	\includegraphics[width=0.7\linewidth]{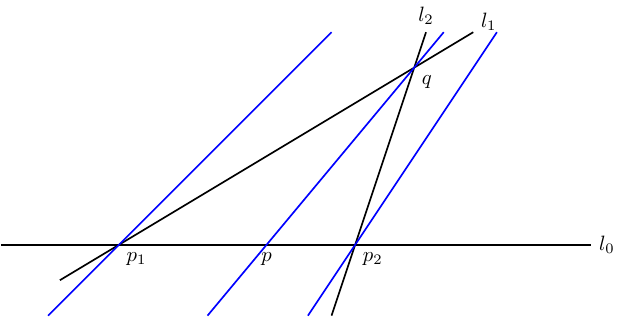}
 	\caption[1]{The case when $ q(l_{1}) = q(l_{2}) $}
 	\label{fig:7}
 \end{figure}
    
	\begin{proof}
	Denote by $ q = q(l_{1}) = q(l_{2}) $. Since all lines through $ p $ with slopes in $ (0,s(l_{1})) $ intersect with the segment $ (p_{2},q) $, we have $ l_{1} $ has the minimum slope among the lines in $ \calA'_{p_{1}} $. Similarly we have $ l_{2} $ has the maximum slope among the lines in $ \calA'_{p_{2}} $. Furthermore, for any point $ p \in L_{2}(\calA) $ between $ p_{1} $ and $ p_{2} $ and any line $ l \in \calA'_{p} $, $ l $ passes through $ q $ since there is no intersection point in $ (p_{1},q) $ and $ (p_{2},q) $. So $ p $ is a double point.
	\end{proof}

	The following lemma shows that each neighbour of $ l_{0} $ provides an element in $ K \cap A' $.
	\begin{lem}\label{lem: connectable points provide elements}
		Let $ q $ be a neighbor of $ l_{0} $. Suppose that $ \calA_{q} = \{l_{1},\cdots,l_{k}\} $, where $ 0 < s(l_{1}) < \cdots <s(l_{k}) $. Denote $ m(l_{i}) $ by $ m_{i} $.
		\begin{enumerate}
			\item If $ q $ is resonant, then
				 \begin{equation*}
					 \sum\limits_{i=1}^{k} \frac{m_{i}-1}{m_{1}\cdots m_{i}}\alpha(l_{i}) \in K \cap A'.
				\end{equation*}
			\item If $ q $ is not resonant, then for any $ 1 \leq i < j \leq k $, $ \alpha(l_{i})-\alpha(l_{j}) \in K \cap A' $.
		\end{enumerate}
	\end{lem}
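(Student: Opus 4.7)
The plan is to derive $\beta(q)$ from the chamber relations of the triangles $\Delta^{(i)}$ with vertices $p_i = l_i \cap l_0$, $p_{i+1}$, and $q$, for $i = 1,\ldots,k-1$. Under the positive-slope, non-negative-$y$ setup, the $p_i$'s appear on $l_0$ in increasing $x$-order with $q$ to the right of all of them, and the neighbour condition is designed so that each $\Delta^{(i)}$ is a bounded chamber (up to subdivisions that I address at the end).

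First I would compute the coefficients via Lemma \ref{lem: difference between extensions}. The displacement into $\Delta^{(i)}$ from $p_i$ has $x_0 > 0$, giving $\lambda_{p_i}(\Delta^{(i)}) = 1$; from $p_{i+1}$ the displacement has $x_0 < 0$ with $y_0 > 0$, and the defining product over $\calA_{p_{i+1}}$ is empty since all slopes are non-negative, giving $\lambda_{p_{i+1}}(\Delta^{(i)}) = 1$; and at $q$ the ratio $y_0/x_0$ lies in $(s(l_i), s(l_{i+1}))$ so the product picks out the lines through $q$ with slope $\leq s(l_i)$, giving $\lambda_q(\Delta^{(i)}) = m_1\cdots m_i$. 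Because $\Delta^{(i)}$ is a chamber, $l_i$ must have minimum positive slope at $p_i$ and $l_{i+1}$ maximum positive slope at $p_{i+1}$, so the adjacent angles are $\alpha_{p_i}(\Delta^{(i)}) = \alpha(l_i)$, $\alpha_{p_{i+1}}(\Delta^{(i)}) = (l_{i+1},l_0) \equiv -\alpha(l_{i+1}) \pmod{\alpha(p_{i+1})^+}$, and $\alpha_q(\Delta^{(i)}) = (l_i,l_{i+1})_q$. Using $\alpha(p_{i+1})^+ \in K$ and the convention $\alpha(l) = 0$ for non-resonant base points, the chamber relation $\alpha(\Delta^{(i)}) \in K$ simplifies to
\begin{equation*}
    \alpha(l_i) - \alpha(l_{i+1}) + m_1\cdots m_i \cdot (l_i,l_{i+1})_q \in K,
\end{equation*}
with the $q$-term omitted when $q \notin R$.

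For non-resonant $q$, this reduces to $\alpha(l_i) - \alpha(l_{i+1}) \in K\cap A'$ for each $i$; telescoping yields $\alpha(l_i) - \alpha(l_j) \in K\cap A'$ for all $i<j$, and $\beta(q) = \sum_{i=1}^{k}(\alpha(l_1)-\alpha(l_i)) \in K\cap A'$. For resonant $q$, I would solve for $(l_i,l_{i+1})_q \equiv (\alpha(l_{i+1})-\alpha(l_i))/(m_1\cdots m_i) \pmod K$ for $i\leq k-1$; since $m_1\cdots m_k = 1$, the coefficient on the wraparound angle $(l_k,l_1)_q$ in the known relation $\alpha(q)^- - \alpha(q)^+ \in K$ vanishes, so substituting and telescoping in the $\alpha(l_j)$-coefficients identifies the resulting element of $K$ with $-\beta(q)$.

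The main obstacle is the case where $q$ is a neighbour through only some of the $l_j$, so that extra lines may subdivide some $\Delta^{(i)}$. I would handle this by a \emph{macro-chamber} summation, replacing the single relation $\alpha(\Delta^{(i)}) \in K$ by $\sum_{\Delta \subset \Delta^{(i)}} \alpha(\Delta) \in K$ over all bounded sub-chambers of $\Delta^{(i)}$. The boundary contributions at the corners $p_i$, $p_{i+1}$, $q$ and their $\lambda$-values remain the same as in the clean case (analyzing each sub-sector separately, the displacement signs force $\lambda = 1$ at the $p_\ast$'s uniformly, and the angular coverage at each corner matches $\alpha(l_i)$, $-\alpha(l_{i+1})$, and $(l_i,l_{i+1})_q$ modulo $\alpha(p_{i+1})^+$), but extra contributions appear at resonant intermediate boundary vertices. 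Those on the $l_0$-side lie in $A'$ automatically, while those on the $l_i$- or $l_{i+1}$-side correspond to neighbours $q'$ strictly below $q$; these are cleared by induction on $y(q)$ using the already-established identities $\beta(q') \in K\cap A'$ and, where needed, Lemma \ref{lem: reduction to a line} to push the remainders into $A'$ modulo $K$.
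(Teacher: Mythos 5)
Your skeleton matches the paper's: the paper also works with the triangles $T_i$ bounded by $l_0$, $l_i$, $l_{i+1}$, sums $\alpha(\Delta)$ over all bounded chambers inside $T_i$, computes $\lambda_{p_i}=\lambda_{p_{i+1}}=1$ and $\lambda_q=m_1\cdots m_i$ exactly as you do, and finishes with the same relation $\alpha(l_i)-\alpha(l_{i+1})+m_1\cdots m_i(l_i,l_{i+1})_q \in K$ and the same telescoping algebra (your resonant-case bookkeeping with $m_1\cdots m_k=1$ reproduces the paper's identity $\beta(q)=\sum_i\frac{c_i-1}{c_i}\beta_i(q)-\alpha(q)^-+\alpha(q)^+$). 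The clean-case computation and the final algebra are fine.

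The gap is in your treatment of the subdivided case, which is where the whole content of the lemma lies. What must be shown is that the projection of $\sum_{\Delta\subset T_i}\alpha(\Delta)$ to $A(p)$ lies in $K$ for every intermediate resonant vertex $p$. For $p$ in the interior of $T_i$ the contribution is $\alpha(p)^++\alpha(p)^-\in K$ (you omit this case entirely, though it is easy). For $p$ on the $l_0$-edge the contribution is $\alpha(p)^+$, which is in $K$; your claim that it ``lies in $A'$ automatically'' is both false ($\alpha(p)^+$ contains the wrap-around angle, which is not in $A'$) and not the relevant property. The essential case is $p$ on the open edge $(p_i,q)$ or $(p_{i+1},q)$: there the contribution is the sum of angles on one side of $l_i$ (resp.\ $l_{i+1}$), and this equals $\alpha(p)^-$ or $\alpha(p)^+$ \emph{only because} $l_i$ (resp.\ $l_{i+1}$) is the line of extremal slope at $p$ --- a geometric fact forced by the neighbour hypothesis: any line through $p$ with smaller (resp.\ larger) slope would create an intersection point on the open segment $(p_{k_0},q)$, contradicting $q=q(l_{k_0})$. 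This is the only place the hypothesis that $q$ is a neighbour is used, and your proposal never invokes it. Your substitute --- induction on $y(q)$ clearing these terms via $\beta(q')\in K\cap A'$ for lower neighbours $q'$ --- cannot work: $\beta(q')$ is supported entirely on $A'\subset\bigoplus_{p\in R_0}A(p)$, i.e.\ on angles at points of $l_0$, so it has zero component in the summand $A(p)$ for any $p$ with $y(p)>0$ and cannot cancel leftover terms there; likewise $K+A'=A$ gives no criterion for membership in $K$. You need to replace that paragraph with the slope-extremality argument above.
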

	
	\begin{proof}
		  Suppose that $ q = q(l_{k_{0}}) $ for some $ 1 \leq k_{0} \leq k $. For $ 1 \leq i \leq k $, denote by $ p_{i} $ the intersection of $ l_{i} $ and $ l_{0} $. For $ 1 \leq i \leq k-1 $, let $ T_{i} $ be the closed triangle bounded by $ l_{0} $, $ l_{i} $ and $ l_{i+1} $. Denote by $ C_{i} $ the set consisting of bounded chambers inside $ T_{i} $ and $ V_{i} $ the set consisting of resonant points in $ T_{i} \setminus \{q,p_{i},p_{i+1}\} $. We claim that for any $ p \in V_{i} $,
			\begin{equation*}
				 \pi_{p}(\sum\limits_{\Delta \in C_{i}}\alpha(\Delta))  \in K,
			\end{equation*}
		where $ \pi_{p} \colon A \rightarrow A(p) $ is the projection map. We verify this claim by considering the position of $ p $. Suppose that $ \calA_{p} = \{l'_{1},\cdots,l'_{n}\} $, where $ 0 < s(l'_{1}) < \cdots <s(l'_{n}) $. 
		 
		 \begin{enumerate}
		 	\item When $ p $ lies in the interior of $ T_{i} $, we have
		 	\begin{equation*}
		 		\pi_{p}(\sum\limits_{\Delta \in C_{i}}\alpha(\Delta))  =  \sum\limits_{j=1}^{n}(m(l'_{1})\cdots m(l'_{j})+1)(l'_{j},l'_{j+1})  =  \alpha(p)^{+}+\alpha(p)^{-} \in K.
		 	\end{equation*}
		 	
		 	\item When $ p $ lies inside the segment $ (p_{i},p_{i+1}) $, we have
		 	\begin{equation*}
		 		\pi_{p}(\sum\limits_{\Delta \in C_{i}}\alpha(\Delta))   =  \sum\limits_{j=1}^{n}(l'_{j},l'_{j+1}) 
		 		 =  \alpha(p)^{+} \in K.
		 	\end{equation*} 
		 	
		 	\item When $ p $ lies inside the segment $ (p_{i},q) $, since  $ q = q(l_{k_{0}}) $, we have $ i \neq k_{0} $. 
		 	
		 	If $ i < k_{0} $, as shown in Figure \ref{fig:8}, the fact that all lines through $ p $ with slopes in $ [0,m_{i}) $ intersect with the segment $ (p_{k_{0}},q) $ forces $ l_{i} = l'_{1} $. By Remark \ref{rmk: compare lambda with alpha}, we have
		 	\begin{equation*}
		 		\pi_{p}(\sum\limits_{\Delta \in C_{i}}\alpha(\Delta))   =   \sum\limits_{j=1}^{n}m(l'_{1})\cdots m(l'_{j})(l'_{j},l'_{j+1}) = \alpha(p)^{-} \in K.
		 	\end{equation*}

             \begin{figure}[H]
 	\centering
 	\includegraphics[width=0.7\linewidth]{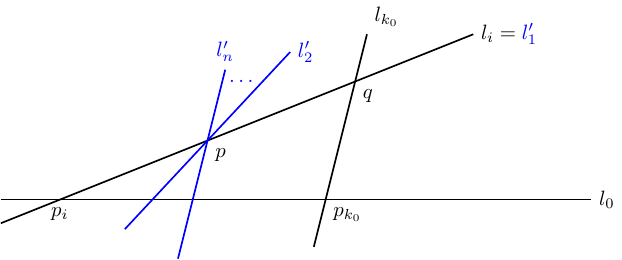}
 	\caption[1]{$ l_{i} $ has the minimum slope among the lines in $ \calA_{p} $}
 	\label{fig:8}
 \end{figure}
            
		 	Similarly, if $ i>k_{0} $, we have $ l_{i} = l'_{n} $ and
		 	\begin{equation*}
		 		\pi_{p}(\sum\limits_{\Delta \in C_{i}}\alpha(\Delta))  =  \sum\limits_{j=1}^{n}(l'_{j},l'_{j+1})  =  \alpha(p)^{+} \in K.
		 	\end{equation*}
		 \item When $ p $ lies inside the segment $ (p_{i+1},q) $, since  $ q = q(l_{k_{0}}) $, we have $ i+1 \neq k_{0} $. 
		 
		 If $ i+1 < k_{0} $, we have $ l_{i+1} = l'_{1} $ and
		 \begin{equation*}
		 	\pi_{p}(\sum\limits_{\Delta \in C_{i}}\alpha(\Delta))  =  \sum\limits_{j=1}^{n}(l'_{j},l'_{j+1})  =  \alpha(p)^{+} \in K.
		 \end{equation*}

		 Similarly, if $ i>k_{0} $, we have $ l_{i+1} = l'_{n} $ and
		 \begin{equation*}
		 	\pi_{p}(\sum\limits_{\Delta \in C_{i}}\alpha(\Delta))   =   \sum\limits_{j=1}^{n}m(l'_{1})\cdots m(l'_{j})(l'_{j},l'_{j+1}) = \alpha(p)^{-} \in K.
		 \end{equation*}
		 \end{enumerate}
		 
		 
		 As a result, taking 
		 \begin{equation*}
		 	\beta_{i}(q) = \sum\limits_{\Delta \in C_{i}}\alpha(\Delta) - \sum\limits_{p \in V_{i}}\pi_{p}(\sum\limits_{\Delta \in C_{i}}\alpha(\Delta)),
		 \end{equation*}
		 we have $ \beta_{i}(q) \in K $.

	  When $ q $ is resonant, for each $ 1 \leq i \leq k-1 $, we have
	  \begin{equation*}
	  	 \beta_{i}(q) = m_{1}\cdots m_{i}(l_{i+1},l_{i})+\alpha(l_{i})-\alpha(l_{i+1}).
	  \end{equation*}
  
  	 Therefore,
	  \begin{equation*}
	  	\sum\limits_{i=1}^{k} \frac{m_{i}-1}{m_{1}\cdots m_{i}}\alpha(l_{i}) = \alpha(q)^{+} - \alpha(q)^{-} +  \sum\limits_{i=1}^{k-1} \frac{m_{1}\cdots m_{i}-1}{m_{1}\cdots m_{i}}\beta_{i}(q)  \in K.
	  \end{equation*}

	When $ q $ is not resonant, for each $ 1 \leq i \leq k-1 $, we have
	\begin{equation*}
		\alpha(l_{i})-\alpha(l_{i+1}) = \beta_{i}(q) \in K,
	\end{equation*}
	which implies the conclusion holds.
	\end{proof}
	
	\begin{cor}\label{cor: definition of beta}
		For any neighbor $ q $ of $ l_{0} $, there exists a $ \mathbb{K} $-linear combination $ \beta(q) $ of $ \alpha(l)(l \in \calA_{q} \cap \calA') $ in $ A' $ such that
		\begin{enumerate}
			\item $ \beta(q) \in K \cap A' $, and
			\item for any $ l \in \calA_{q} \cap \calA' $ such that $ q(l) = q $, the coefficient of $ \alpha(l) $ in $ \beta(q) $ is nonzero.
		\end{enumerate}
	\end{cor}
	
	\begin{proof}
		Suppose that $ \calA_{q} = \{l_{1},\cdots,l_{k}\} $, where $ 0 < s(l_{1}) < \cdots <s(l_{k}) $. Denote $ m(l_{i}) $ by $ m_{i} $.
		
		When $ q $ is resonant, take
		\begin{equation*}
			\beta(q) = \sum\limits_{i=1}^{k} \frac{m_{i}-1}{m_{1}\cdots m_{i}}\alpha(l_{i}) = \sum\limits_{l_{i} \in \calA'} \frac{m_{i}-1}{m_{1}\cdots m_{i}}\alpha(l_{i}). 
		\end{equation*} 
		
		Then $ \beta(q) \in K \cap A' $ by Lemma \ref{lem: connectable points provide elements}, and its coefficients are all nonzero since $ m(l) \neq 1 $ for any $ l \in \calA $. 
		
		When $ q $ is not resonant, by Lemma \ref{lem: case of repeated connectable points}, there exist $ 1 \leq i_{1} < i_{2} \leq k $ such that
		\begin{equation*}
			\{l \in \calA_{q} \cap \calA' \mid q(l) = q\} \subseteq \{l_{i_{1}},l_{i_{2}}\}.
		\end{equation*}
		
		 Then by Lemma \ref{lem: connectable points provide elements}, we have $ \beta(q) = \alpha(l_{i_{1}}) - \alpha(l_{i_{2}}) $ meets the requirements. 
	\end{proof}

Now we are ready to prove our main theorem. 
\begin{proof}[Proof of Theorem \ref{thm: main}]
	If $ R_{0} = \emptyset $, then $ A' = 0 $ and $ K = A $. Thus $ H_{1}(M(\calA),\LL) = 0 $. 
	
	Henceforth assume that $ \# R_{0} > 1 $. Define $ N = \{q(l) \mid l \in \calA'\} $. We first prove that $ \beta(q)(q \in N) $ are linearly independent in $ K \cap A' $, where $ \beta(q) $ is defined in Corollary \ref{cor: definition of beta}. 
	
	Otherwise, there exist points $ q_{1},\cdots,q_{k} \in N $ and nonzero coefficients $ \lambda_{1},\cdots,\lambda_{k} \in \mathbb{K} $, such that the linear combination
	\begin{equation*}
		\beta = \sum\limits_{i=1}^{k}\lambda_{i}\beta(q_{i}) = 0.
	\end{equation*}
	Without loss of generality, we may assume that $ q_{1} $ has the maximum $ y $-coordinate among $ q_{1},\cdots,q_{k} $. Suppose that $ q_{1} = q(l_{1}) $ for $ l_{1} \in \calA' $. Since $ l_{1} $ contains no points in $ L_{2}(\calA) $ with positive $ y $-coordinate less than $ y(q_{1}) $, we have $ l_{1} \notin \calA_{q_{i}} $ holds for any $ i \geq 2 $. So the coefficient of $ \alpha(l_{1}) $ in $ \beta $ is equal to the one in $ \lambda_{1}\beta(q_{1}) $, which is nonzero by Corollary \ref{cor: definition of beta}. This contradicts the assumption $ \beta = 0 $. So $ \beta(q)(q \in N) $ are linearly independent in $ K \cap A' $.
	
   Then we estimate the size of $ N $. Suppose that $ R_{0} = \{p_{1},\cdots,p_{m}\} $, where $ m = \# R_{0} > 0 $ and $ x(p_{1}) < \cdots < x(p_{m}) $. Denote by $ l_{i}^{+} $ (resp. $ l_{i}^{-} $) the line in $ \calA'_{p_{i}} $ with the maximum (resp. minimum) slope. For any $ 1 \leq i \leq m $, since $ \mult(p_{i}) > 2 $, we have $ l_{i}^{+} \neq l_{i}^{-} $. So by Lemma \ref{lem: case of repeated connectable points}, for any $ q \in N $,
   \begin{equation*}
   	\# \{l \in \calA' \mid q(l) = q\} \leq 2.
   \end{equation*}
   Furthermore, when the equality holds, there exists some index $ 1 \leq i \leq m-1 $ such that $ q = q(l_{i}^{-}) = q(l_{i+1}^{+}) $. Therefore,
   \begin{equation*}
   	\# N \geq \# \calA' - (m-1) = \# \calA' - \# R_{0}+1.
   \end{equation*}
   
   	So combining Proposition \ref{prop: K is given by bounded chambers and resonant points} and Lemma \ref{lem: reduction to a line}, we have
   \begin{equation*}
   	h_{1}( M(\calA),\LL) = \dim A - \dim K = \dim A' - \dim K\cap A' \leq \# \calA' - \# N \leq  \# R_{0} - 1.
   \end{equation*}
\end{proof}

\section{The case of line arrangements containing a sharp pair}\label{section: sharp pair}

In this section, we discuss the case where $ \calA $ is a complexified real line arrangement containing a sharp pair $ (l_{0},l'_{0}) $. In other words, one of the two components of $ \RR\PP^{2}\setminus(l_{0}\cup l'_{0}) $ contains no intersection points in $ L_{2}(\calA) $. Denote by $ p_{0}$ the intersection point of $ l_{0} $ and $ l'_{0} $.

Take a generic real line $ l_{\infty} $ in $ \CC\PP^{2} $ such that both $ (l_{0},l_{\infty}) $ and $ (l_{0},l'_{0}) $ are sharp pairs in $ \calA \cup \{l_{\infty}\} $. Then, by taking $ l_{\infty} $ as the line at infinity and applying a further real coordinate transformation, we may assume that
\begin{enumerate}
			\item 	$ l_{0} = \{y=0\} $ and $ l'_{0} = \{y = x\} $,
			\item   The slopes $ s(l) $ of the lines $ l $ in $ \calA $ are distinct numbers in $ [0,1] $, and 
			\item 	for any $ p \in L_{2}(\calA) $, either $ x(p) \leq 0 $ and $ y(p) = 0 $, or $ x(p) \geq y(p) > 0 $.		
\end{enumerate}
Under the above assumptions, we can keep the notation from Section \ref{section: proof of main theorem}. Furthermore, for any $ l \in \calA \setminus \calA_{p_{0}} $, the point $ q(l) $ is exactly the intersection point of $ l $ and $ l'_{0} $. This property directly leads to the following lemma.
	\begin{lem}\label{lem: all elements are scalar multiples of sharp pair}
		 For any $ l \in \calA \setminus \calA_{p_{0}} $, $ [\alpha(l)] $ is a scalar multiple of $ [\alpha(l'_{0})] $ in $ A'/K\cap A' $.
	\end{lem}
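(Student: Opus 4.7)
The plan is to prove the lemma by applying Lemma \ref{lem: connectable points provide elements} at the neighbor $q = q(l) = l \cap l'_0$ on $l'_0$ and then closing the remaining gap, in the resonant case, by invoking the resonance relations at points of $R_0$. The key geometric input is the observation recorded just before the statement: for every $l \in \calA \setminus \calA_{p_0}$ the neighbor $q(l)$ lies on $l'_0$, and $l'_0$ is the unique member of $\calA_q$ of maximum slope $1$.

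If $l \notin \calA'$, then $\alpha(l) = 0$ by convention and the claim is trivial. Otherwise, set $q = q(l)$. In the base case where $q$ is non-resonant, Lemma \ref{lem: connectable points provide elements}(2) gives $\alpha(l) - \alpha(l'_0) \in K \cap A'$ directly, so $[\alpha(l)] = [\alpha(l'_0)]$. In the resonant case, write $\calA_q = \{l_1, \ldots, l_{k-1}, l_k = l'_0\}$ in slope order; Lemma \ref{lem: connectable points provide elements}(1) produces only the single relation $\beta(q) = \sum_{i=1}^{k}\frac{m(l_i)-1}{m(l_1)\cdots m(l_i)}\alpha(l_i) \in K \cap A'$, which does not suffice on its own.

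To generate the additional relations needed, I would use at each $p_i = l_i \cap l_0 \in R_0$ (with $i < k$) the element $\alpha(p_i)^- - \alpha(p_i)^+ \in K$: thanks to the resonance condition at $p_i$, the coefficient of the wrap-around cyclic angle at $p_i$ cancels, so this element actually lies in $A'$ and yields a linear relation among $\{\alpha(l') : l' \in \calA'_{p_i}\}$. A crucial consequence of the sharp pair setup is that for $p_i \neq p_0$ one has $\calA'_{p_i} \subset \calA \setminus \calA_{p_0}$, since any line through two distinct points of $l_0$ must equal $l_0$; hence the relations at such $p_i$ stay within the subspace spanned by $\{\alpha(l) : l \in \calA' \setminus \calA_{p_0}\} \cup \{\alpha(l'_0)\}$.

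The inductive argument would proceed by ordering the lines in $\calA' \setminus \calA_{p_0}$ by the $y$-coordinate of their image on $l'_0$, using the monotone dependence $y(l \cap l'_0) = s(l)\,x(l \cap l_0)/(s(l)-1)$ of this height on the slope $s(l)$ at a fixed point $l \cap l_0$. Processing lines from the bottom of $l'_0$ upward, at each new height one combines $\beta(q)$ with the relations from each $p_i$ applied to already-handled lines of strictly smaller $y$-coordinate to express each $[\alpha(l_i)]$ as a scalar multiple of $[\alpha(l'_0)]$. The main technical obstacle is the bookkeeping: one must verify that the resulting linear system is solvable at every step, which relies on the non-vanishing $m(l) \neq 1$ for all $l \in \calA$ (guaranteeing that all relevant coefficients in the $p_i$-relations are nonzero) and on a careful alignment of the slope ordering at each $p$ with the height ordering on $l'_0$.
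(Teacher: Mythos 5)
Your toolkit is essentially the right one, and several of your preliminary observations are correct: $q(l)=l\cap l'_0$ for every $l\in\calA\setminus\calA_{p_0}$; the non-resonant case does follow at once from Lemma \ref{lem: connectable points provide elements}(2); the element $\alpha(p)^{-}-\alpha(p)^{+}$ lies in $A'$ for $p\in R_0$ and gives a relation among $\{\alpha(l'):l'\in\calA'_p\}$; and $\calA'_{p}\subset\calA\setminus\calA_{p_0}$ for $p\neq p_0$. These are the same ingredients the paper uses (the paper inducts on $x(p)$ along $l_0$ rather than on heights along $l'_0$, but that difference alone is not the issue).

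The genuine gap is that the step you defer as ``bookkeeping'' is the entire mathematical content of the proof, and as described your induction does not close. Run the bottom-up ordering on $l'_0$ and let $q$ be a resonant point with $\calA_q=\{l_1,\dots,l_{k-1},l'_0\}$, $k\ge 3$, where $l_i$ meets $l_0$ at $p_i$ and $x(p_1)<\dots<x(p_{k-1})$. At height $y(q)$ you introduce up to $k-1$ new unknowns $[\alpha(l_i)]$, all at the same height, but only one new relation $\beta(q)$. Moreover $\alpha(p_i)^{-}-\alpha(p_i)^{+}$ has the form $\bigl(\prod_{l'\neq l^{\max}}m(l')-1\bigr)\alpha(l^{\max})-\sum(\cdots)$, so it eliminates only the \emph{maximum}-slope line $l^{\max}$ of $\calA'_{p_i}$ in terms of lines meeting $l'_0$ strictly lower; it says nothing about $l_i$ unless $l_i=l^{\max}$. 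Hence the system at height $y(q)$ is a priori underdetermined. What rescues it is the sharp-pair geometry of Lemma \ref{lem: case of repeated connectable points}: $l_1$ must be the minimum-slope line of $\calA'_{p_1}$, $l_{k-1}$ the maximum-slope line of $\calA'_{p_{k-1}}$, and every intermediate $p_i$ is a double point, so $\alpha(l_i)=0$ for $2\le i\le k-2$; then the relation at $p_{k-1}$ disposes of $l_{k-1}$, and $\beta(q)$, whose coefficient $\tfrac{m(l_1)-1}{m(l_1)}$ on $\alpha(l_1)$ is nonzero, disposes of $l_1$. None of this appears in your write-up, so the assertion that ``the resulting linear system is solvable at every step'' is exactly the unproven claim. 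This identification of which points of $l'_0$ and $l_0$ are double points is precisely what the paper's proof supplies before running its induction, and you would need to supply it (or an equivalent) to make your ordering work.
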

	
	\begin{proof}
		It suffices to prove that for every intersection point $ p $ on $ l_{0} $ other than $ p_{0} $,  $ [\alpha(l)] $ is a scalar multiple of $ [\alpha(l'_{0})] $ for any $ l \in \calA'_{p} $. 
		
		Without loss of generality, we may assume that $ p $ is resonant. Suppose that $ \calA'(p) = \{l_{1},\cdots,l_{k}\} $, where $ 0 < s(l_{1}) < \cdots < s(l_{k}) $. Denote by $ q_{i} $ the intersection point of $ l_{i} $ and $ l'_{0} $. Suppose that $ \calA_{q_{1}} = \{l'_{0},l'_{1},\cdots,l'_{n}\} $, where $ 1 = s(l'_{0}) > \cdots > s(l'_{n}) $. Denote by $ p_{j} $ the intersection points of $ l'_{j} $ and $ l_{0} $. See Figure \ref{fig:9}.

         \begin{figure}[H]
 	\centering
 	\includegraphics[width=0.5\linewidth]{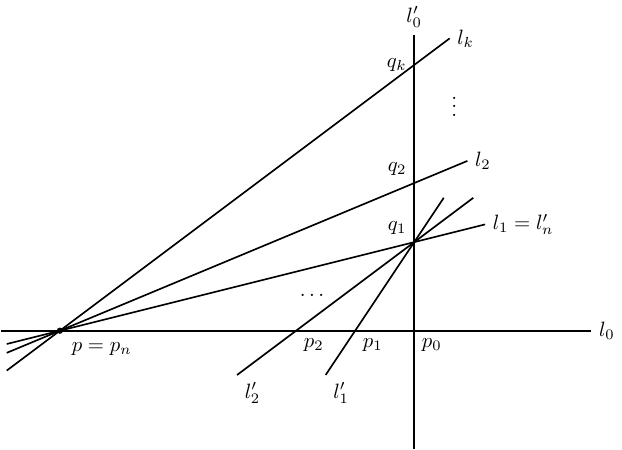}
 	\caption[1]{Picture for Lemma \ref{lem: all elements are scalar multiples of sharp pair}}
 	\label{fig:9}
 \end{figure}

		Since there is no intersection points in the region $ \{ (x,y) \in \RR^{2} \mid y>x,y>0\} $, we have $ q_{i} = q(l_{i}) $, $ l_{1} = l'_{n} $, and $ p_{j}(2 \leq j \leq n-1) $ and $ q_{i}(2 \leq i \leq k-1) $ are all double points. Thus, by applying Lemma \ref{lem: connectable points provide elements} to the point $ q_{i}(1\leq i \leq k-1) $ we can express $ [\alpha(l_{i})] $ as linear combinations of $ [\alpha(l'_{1})] $ and $ [\alpha(l'_{0})] $ as follows.

	For $ l_{i}(2 \leq i \leq k-1) $, since $ q_{i} $ is a double point, we have $ [\alpha(l_{i})] = [\alpha(l'_{0})] $. For $ l_{1} $, if $ q_{1} $ is not resonant, we have $ 	[\alpha(l_{1})] = [\alpha(l'_{0})]  $. Otherwise, since $ l'_{j} \notin \calA' $ for any $ 2 \leq j \leq n-1 $, we have
			\begin{equation*}
				[\alpha(l_{1})] = 
					 -\frac{(m(l'_{1})-1)m(l_{1})m(l'_{0})}{m(l_{1})-1}[\alpha(l'_{1})]-\frac{(m(l'_{0})-1)m(l_{1})}{m(l_{1})-1}[\alpha(l'_{0})].
			\end{equation*}	 
			 
		Furthermore, for $ l_{k} $, we have
			\begin{equation*}
				(m(l_{0})\cdots m(l_{k-1})-1)\alpha(l_{k}) - \sum\limits_{i=1}^{k-1}(m(l_{i})-1)m(l_{0})\cdots m(l_{i-1})\alpha(l_{i}) =  \alpha(p)^{-}-\alpha(p)^{+} \in K.
			\end{equation*}
		
		Combining the above computations, we have
		\begin{equation*}
			[\alpha(l_{k})] = \frac{(m(l_{1})-1)m(l_{0})m(l_{k})}{1-m(l_{k})}[\alpha(l_{1})]+\frac{1-m(l_{0})m(l_{1})m(l_{k})}{1-m(l_{k})}[\alpha(l'_{0})]
		\end{equation*}
	is also a linear combination of $ [\alpha(l'_{1})] $ and $ [\alpha(l'_{0})] $.
	
	Note that when $ q_{1} $ is not resonant, the above formulae already imply that $ [\alpha(l_{i})](1 \leq i \leq k) $ are all scalar multiples of $ [\alpha(l'_{0})] $. When $ q_{1} $ is resonant, we have $ n \geq 2 $ and $ 0 > x(p_{1}) > x(p) $. Thus the conclusion follows easily by induction on $ x(p) $. 
	\end{proof}
	
	\begin{cor}[Theorem \ref{thm: main for sharp pair}]\label{cor: bound for sharp pair}
		Under the above assumptions, $ A'/K \cap A' $ is generated by $ [\alpha(l'_{0})] $. In particular, $ h_{1}(M(\calA),\LL) \leq 1 $. 
	\end{cor}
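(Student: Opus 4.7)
The plan is to combine Lemma~\ref{lem: all elements are scalar multiples of sharp pair} --- which already handles $l \in \calA \setminus \calA_{p_0}$ --- with a separate argument for $l \in \calA'_{p_0}$, and then conclude via the identification $h_1(M(\calA),\LL) = \dim A' - \dim(K \cap A')$ established in the proof of Theorem~\ref{thm: main}. The case $p_0 \notin R_0$ is immediate: no line other than $l_0$ can pass through two distinct points of $l_0$, so $\calA' \cap \calA_{p_0} = \emptyset$, and Lemma~\ref{lem: all elements are scalar multiples of sharp pair} together with the convention $\alpha(l'_0) = 0$ gives $A' \subset K$, forcing $h_1 = 0$.

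Henceforth assume $p_0 \in R_0$, so that $l'_0 \in \calA'_{p_0} \subset \calA'$. Fix $l \in \calA'_{p_0} \setminus \{l'_0\}$; its slope lies in $(0,1)$. Every line of $\calA_{p_0}$ meets $l$ only at $p_0$, so provided $\calA$ is not a pencil through $p_0$ (the pencil case can be checked directly), $l$ must meet some line outside $\calA_{p_0}$ and hence admits a further intersection point in the wedge $\{x \geq y > 0\}$; let $q := q(l)$ denote the one of minimum $y$-coordinate. A crucial observation is that every line $l' \in \calA_q \setminus \{l\}$ lies in $\calA \setminus \calA_{p_0}$: otherwise $l'$ would pass through both of the distinct points $p_0$ and $q$, forcing $l' = l$.

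Apply Lemma~\ref{lem: connectable points provide elements} at the neighbour $q$ to obtain $\beta(q) \in K \cap A'$. When $q$ is resonant, the coefficient of $\alpha(l)$ in $\beta(q)$ has the form $(m(l) - 1)/\prod m(l_j)$, which is nonzero thanks to the hypothesis $m(l) \neq 1$; when $q$ is not resonant, Lemma~\ref{lem: connectable points provide elements}~(2) supplies the relation $\alpha(l) - \alpha(l') \in K \cap A'$ for every $l' \in \calA_q \setminus \{l\}$. In either case, passing to $A'/(K \cap A')$ and rewriting each $[\alpha(l')]$ via Lemma~\ref{lem: all elements are scalar multiples of sharp pair} as a scalar multiple of $[\alpha(l'_0)]$ lets us solve for $[\alpha(l)]$ as a scalar multiple of $[\alpha(l'_0)]$. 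Combined with the $p_0 \notin R_0$ reduction, this shows $A'/(K \cap A')$ is generated by $[\alpha(l'_0)]$, whence $h_1(M(\calA),\LL) \leq 1$. The main obstacle is precisely the reach to lines in $\calA'_{p_0}$, which Lemma~\ref{lem: all elements are scalar multiples of sharp pair} cannot access; the resolution is to exploit a neighbour $q$ of $l_0$ lying off $l_0$, at which Lemma~\ref{lem: connectable points provide elements} delivers a $K$-relation in which $\alpha(l)$ appears with nonzero coefficient and every other term is controlled by the lemma.
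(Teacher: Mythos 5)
Your proposal is correct and follows essentially the same route as the paper: reduce via Lemma \ref{lem: all elements are scalar multiples of sharp pair} to the lines $l \in \calA'_{p_{0}}\setminus\{l'_{0}\}$, observe that $\calA_{q(l)}\setminus\{l\} \subset \calA\setminus\calA_{p_{0}}$, and apply Lemma \ref{lem: connectable points provide elements} at the neighbour $q(l)$ to solve for $[\alpha(l)]$. Your extra checks (that the coefficient of $\alpha(l)$ in $\beta(q)$ is nonzero because $m(l)\neq 1$, and the degenerate case $p_{0}\notin R_{0}$) are points the paper leaves implicit, but they do not change the argument.
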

	
	\begin{proof}
		By Lemma \ref{lem: all elements are scalar multiples of sharp pair}, it suffices to prove that for any $ l \in \calA'_{p_{0}} \setminus \{l'_{0}\} $, $ [\alpha(l)] $ is a scalar multiple of $ [\alpha(l'_{0})] $. Denote by $ q = q(l) $. Since $ (\calA_{q}\setminus\{l\}) \subset (\calA \setminus \calA_{p_{0}}) $, for any $ l' \in \calA_{q}\setminus\{l\} $, $ [\alpha(l')] $ is a scalar multiple of $ [\alpha(l'_{0})] $. So by Lemma \ref{lem: connectable points provide elements}, $ [\alpha(l)] $ is a linear combination of $ [\alpha(l')](l' \in \calA_{q}\setminus\{l\}) $, which is still a scalar multiple of $ [\alpha(l'_{0})] $.

	\end{proof}
	
	Now we are ready to prove Theorem \ref{thm: odd monodromy}.
	 
	 \begin{proof}[Proof of Theorem \ref{thm: odd monodromy}]
	 	
	 	 Without loss of generality, we may assume $ \zeta \neq 1 $. Then by Corollary \ref{cor: bound for sharp pair}, we have $ H_{1}(M(\calA),\LL) \simeq A'/K \cap A' = \mathbb{K} \cdot [\alpha(l'_{0})] $ and $ [\alpha(l'_{0})] \neq 0 $.
	 	
	 	Construct a sequence of lines in $ \calA $ as follows. Take $ p_{1} \in R_{0} $ with the maximum negative $ x $-coordinate. Let $ l_{1} $ (resp. $ l_{2} $) be the line in $ \calA'_{p_{1}} $ with the minimum (resp. maximum) slope. Denote by $ q_{i} $ the intersection point of $ l_{i} $ and $ l'_{0} $.
	 	
	 	Assume that the lines $ l_{1},\cdots,l_{2k} $, the points $ p_{1},\cdots,p_{k},q_{1},\cdots,q_{k+1} $ have been taken. If $ q_{k+1} $ is not resonant, stop. Otherwise, let $ l_{2k+1} $ be the line in $ \calA_{q_{k+1}} $ with the minimum slope and let $ p_{k+1} $ be the intersection point of $ l_{2k+1} $ and $ l_{0} $. If $ p_{k+1} $ is not resonant, stop. Otherwise, let $ l_{2k+2} $ be the line in $ \calA'_{p_{k+1}} $ with the maximum slope and let $ q_{k+2} $ be the intersection point of $ l_{2k+2} $ and $ l'_{0} $. Then repeat this step with $ k $ replaced by $ k+1 $.
	 	
	 	This process necessarily terminates since $ \calA $ is a finite set. Eventually we obtain a sequence of lines $ l_{1},\cdots,l_{n} $ in $ \calA $. In particular,
	 	 \begin{equation*}
	 		[\alpha(l_{n})] = \begin{cases}
	 			0 & \text{If }2 \nmid n \\
	 			[\alpha(l'_{0})] & \text{Otherwise}
	 		\end{cases}
	 	\end{equation*}

	 	As we computed in the proof of Lemma \ref{lem: all elements are scalar multiples of sharp pair}, we have the following recurrence formulae:
	 	\begin{equation*}
	 		[\alpha(l_{2k})] = -\zeta^{2}[\alpha(l_{2k-1})]+(1+\zeta+\zeta^{2})[\alpha(l'_{0})]
	 	\end{equation*}
	 	and
 		\begin{equation*}
 		[\alpha(l_{2k+1})] = 
 		-\zeta^{2}[\alpha(l'_{2k})]-\zeta[\alpha(l'_{0})].
	 	\end{equation*}
 
 		The initial value $ [\alpha(l_{1})] $ depends on whether $ q_{1} $ is a resonant point, and this ultimately leads to different equations at $ l_{n} $ as shown below.
 		\begin{enumerate}
 			\item If $ q_{1} $ is resonant, the maximality of $ x(p_{1}) $ implies that
 			\begin{equation*}
 				[\alpha(l_{1})] = -\zeta[\alpha(l'_{0})].
 			\end{equation*}
 			
 			Thus, by recursion, we have
 			\begin{equation*}
 				[\alpha(l_{2k})] = (\sum\limits_{i=0}^{4k-1}\zeta^{i})[\alpha(l'_{0})]
 			\end{equation*}
 			and
 			\begin{equation*}
 				[\alpha(l_{2k+1})] = -(\sum\limits_{i=1}^{4k+1}\zeta^{i})[\alpha(l'_{0})].
 			\end{equation*}
 			
 			In particular, at $ l_{n} $, we have
 			\begin{equation*}
 				(\zeta^{2n-1}-1)[\alpha(l'_{0})] = (\zeta-1)(\sum\limits_{i=0}^{2n-2}\zeta^{i})[\alpha(l'_{0})] = 0,
 			\end{equation*}
 			which implies that $ \zeta^{2n-1} = 1 $.
 			\item If $ q_{1} $ is not resonant, then $ 	[\alpha(l_{1})] = [\alpha(l'_{0})] $. Similarly we have, for $ k \geq 1 $,
 			\begin{equation*}
 				[\alpha(l_{2k})] = (\sum\limits_{i=0}^{4k-3}\zeta^{i})[\alpha(l'_{0})]
 			\end{equation*}
 			and
 			\begin{equation*}
 				[\alpha(l_{2k+1})] = -(\sum\limits_{i=1}^{4k-1}\zeta^{i})[\alpha(l'_{0})].
 			\end{equation*}
 			
 			In particular, at $ l_{n} $, we have
 			\begin{equation*}
 				(\zeta^{2n-3}-1)[\alpha(l'_{0})] = (\zeta-1)(\sum\limits_{i=0}^{2n-4}\zeta^{i})[\alpha(l'_{0})] = 0,
 			\end{equation*}
 			which implies that $ \zeta^{2n-3} = 1 $.
 		\end{enumerate}

 		In both cases there exists an odd number $ d $ such that $ \zeta^{d} = 1 $.
\end{proof}
	 
	 \bibliographystyle{plain}
	\bibliography{reference}
\end{document}